\theoremstyle{plain}
\newtheorem{theorem}{Theorem}
\newtheorem{lemma}[theorem]{Lemma}
\newtheorem{corollary}[theorem]{Corollary}
\newtheorem{proposition}[theorem]{Proposition}
\theoremstyle{definition}
\newtheorem{definition}[theorem]{Definition}
\newtheorem{example}[theorem]{Example}
\newtheorem{conjecture}[theorem]{Conjecture}
\newtheorem{problem}[theorem]{Problem}
\theoremstyle{remark}
\newcommand{\Complex}{\mathbb{C}}
\newcommand{\CC}{\mathbb{C}}
\newcommand{\ds}{\displaystyle}
\newcommand{\Recon}[2]{{\mathsf{Recon}}(#1,#2)}
\newcommand{\MMmatrix}[3]{{\mathfrak{M}}^{#1}_{#3} {#2}}
\newcommand{\RRmatrix}[2]{{\mathfrak{R}}^{#1}_{#2}}
\def\bbbeta{z}
\DeclareMathOperator{\trace}{\mathrm{trace}}
\DeclareMathOperator{\rel}{\mathsf{rel}}
\DeclareMathOperator{\Sym}{\mathcal{S}}
\DeclareMathOperator{\Inde}{\mathsf{Ind}}
\DeclareMathOperator{\indparts}{\mathsf{indparts}}
\DeclareMathOperator{\flip}{\mathsf{flip}}
\DeclareMathOperator{\comp}{\mathsf{comp}}
\DeclareMathOperator{\Colourings}{\mathsf{Colours}}
\DeclareRobustCommand{\stirlingfirst}{\genfrac[]{0pt}{}}
\DeclareRobustCommand{\stirlingsecond}{\genfrac\{\}{0pt}{}}
\newcommand{\peg}{p}
\newcommand{\Pegs}{\mathrm{Pegs}}
\DeclareMathOperator{\PegSet}{\mathrm{PegSet}}
\def\retyup{1}
\newcommand{\RestColourings}[2]{\mathsf{Colourings}(#1,#2)}
\newcommand{\Numb}[2]{ \left(\kern-0.9ex\left(\kern-0.9ex\left(\hspace*{-0.6ex}\begin{array}{c}#1\\ #2\end{array}\hspace*{-0.6ex} \right)\kern-0.9ex\right)\kern-0.9ex\right)^{\kern-0.6ex\star} } 
\newcommand{\sNumb}[2]{ \left(\kern-0.9ex\left(\kern-0.9ex\left(\substack{#1\\[0.3em] #2} \right)\kern-0.9ex\right)\kern-0.9ex\right)^{\kern-0.6ex\star} } 
\newcommand{\OtherNumb}[2]{N_{#2}(#1) }
\newcommand{\starcup}{$\sqcup$\kern-0.58em{$\star$}}
\newcommand{\lbrak}{\left[\kern-0.6em\right(}
\newcommand{\webproduct}{\,\blacklozenge\,}
\newcommand{\webpower}[2]{#1^{\webproduct #2}}
\newcommand{\Fubini}[2]{\mathcal{F}_{#1}{#2}}
\title{Web matrices: structural properties and\\ generating combinatorial identities}
\author[M. Dukes]{Mark Dukes}
\author[C. D. White]{Chris D. White}
\address{MD: Department of Computer and Information Sciences, University of Strathclyde, Glasgow, G1 1XH, UK}
\address{CDW: School of Physics and Astronomy, University of Glasgow, Glasgow, G12 8QQ, UK}
\subjclass[2010]{05A05, 05A19, 05C50}
\begin{document}

\maketitle

\begin{abstract}
In this paper we present new results for the combinatorics of web diagrams and
web worlds. These are discrete objects that arise in the physics of calculating
scattering amplitudes in non-abelian gauge theories. Web-colouring and
web-mixing matrices (collectively known as web matrices) are indexed by ordered
pairs of web-diagrams and contain information relating the number of colourings
of the first web diagram that will produce the second diagram.

We introduce the black diamond product on power series and show how it
determines the web-colouring matrix of disjoint web worlds. Furthermore, we show
that combining known physical results with the black diamond product gives a new
technique for generating combinatorial identities. Due to the complicated action
of the product on power series, the resulting identities appear highly
non-trivial.

We present two results to explain repeated entries that appear in the web matrices. 
The first of these shows how diagonal web matrix
entries will be the same if the comparability graphs of their associated
decomposition posets are the same. The second result concerns general repeated
entries in conjunction with a flipping operation on web diagrams.

We present a combinatorial proof of idempotency of the web-mixing matrices,
previously established using physical arguments only. We also show how the
entries of the square of the web-colouring matrix can be achieved by a linear
transformation that maps the standard basis for formal power series in one
variable to a sequence of polynomials. We look at one parameterized web world
that is related to indecomposable permutations and show how determining the
web-colouring matrix entries in this case is equivalent to a combinatorics on
words problem.\\
\smallskip
\noindent \textbf{Keywords.}
web diagram, web world, combinatorial identity, idempotence, black diamond product
\end{abstract}


\section{Introduction}\label{secone}
Web diagrams are discrete objects that are subject to certain colouring and reconstruction
operations, and arise in physics in the calculation of scattering
amplitudes in non-abelian gauge theories~\cite{Gardi:2010rn,Mitov:2010rp,Vladimirov:2015fea}. 
A prominent example is the
theory of quarks and gluons, Quantum Chromodynamics (QCD), which is of
great topical relevance given its application to current experiments
such as the Large Hadron Collider. 

Whilst some properties of web-mixing matrices have been established 
from a physics point of view~\cite{Gardi:2010rn,GPO}, a fully general 
understanding of their structure and properties remains elusive. 
Web-colouring and web-mixing matrices admit a purely combinatorial definition and this provides an alternative
framework to the physical picture to elucidate their properties. 
Our ultimate hope is that a detailed understanding of web diagrams and their 
matrices can be used to dramatically improve the precision of theoretical 
predictions for particle collider experiments. 

The combinatorics of web diagrams and web matrices is an interesting study 
in its own right in that it combines parts of order theory, graph theory, and the theory of permutations in a new and novel way. 
Our seminal paper on the combinatorics of web diagrams \cite{jcta} looked at these objects from several angles
and has a companion physics paper~\cite{jhep} which explains how the results are pertinent to particle physics applications~\footnote{For a recent introduction to the latter, see ref.~\cite{White:2015wha}.}.
We were able to show, for example, that for particular web diagrams, the diagonal entries of the web matrices corresponding to them depend on the generating function of the descent statistic formed by summing over the Jordan-H\"older set of all linear extensions of a web diagram's decomposition poset (partially ordered set).
We examined some special (parameterized) classes of web worlds which could be `exactly' solved, and one of these saw the introduction of a new permutation statistic that
did not seem to have been previously studied.
Another result is that the number of different diagrams in a web world can be given by a hook-length style formula on its representation matrix.

Our aim in this study is not just to build on some of the results and directions that were initiated in \cite{jcta}, but to investigate some new aspects of these web diagrams that have not been explored before.
Amongst these, we will give a combinatorial proof of idempotency of the web-mixing matrices
that was originally established by way of a physics argument in Gardi
and White~\cite[\S 3]{GPO}.
We will also introduce the {\it{black diamond product}} on power series
and show how the web-colouring matrix of a web world whose web diagrams can be `partitioned' (in a sense to be made precise later) depend on this black diamond product.
Furthermore, we will show how a physics argument gives rise to a new technique for generating combinatorial identities using the black diamond product.
We think that this technique is interesting in that it has
the potential to generate quite unusual looking identities due to the
black diamond product behaving in quite a complicated way on the power series in question.

We will concern ourself with the web diagrams that were defined in ~\cite{jcta}~\footnote{The diagrams we consider here are
referred to as {\it Multiple Gluon Exchange Webs} (MGEWs) in
refs.~\cite{Gardi:2013saa,Gardi:2011yz}. Other types of web are
possible, but are beyond the scope of the present work.}. 
Let $P=\{p_1,\ldots,p_n\}$ be a set of $n$ pegs that are rooted to a
plane.  A web diagram $D$ on $P$ is a set of 4-tuples
$e_i=(a_i,b_i,c_i,d_i)$ where the $e_i$ represents an edge between peg
$p_{a_i}$ and peg $p_{b_i}$ for which the endpoint of $e_i$ on peg
$p_{a_i}$ is the $c_i$th highest vertex on that peg from the plane,
and the endpoint of $e_i$ on peg $b_i$ is the $d_i$th highest vertex
on that peg from the plane.  The labels of the endpoints of edges on
each peg must be distinct, and this is taken care of in the formal
definition, Definition~\ref{wddefn}.
We will sometimes abuse the terminology by referring to `peg $i$' in place of `peg $p_i$'.

Two illustrations of web diagrams are given in Figure~\ref{fig:1}.
The \emph{web world} of a web diagram $D$ is the set of all web
diagrams that result from permuting {\it{the order}} in which vertices
appear on a peg, and we usually denote it by $W(D)$.  The two web
diagrams in Figure~\ref{fig:1} are in the same web world. In the
physics context in which such diagrams arise, the pegs represent
quarks or gluons emanating from the interaction point, and the lines
joining pegs correspond to additional gluons being radiated and
absorbed. The conventional depiction of each web diagram is a
so-called {\it Feynman diagram}, an example of which is shown on the
left of Figure~\ref{fig:feyn}.

\begin{figure}[h!]
\centerline{
\begin{tabular}{c@{\hspace*{5em}}c@{\hspace*{5em}}c}
{\includegraphics[scale=0.8]{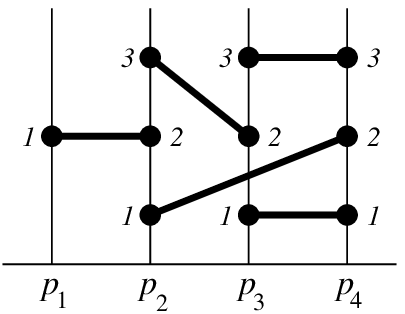}} & {\includegraphics[scale=0.8]{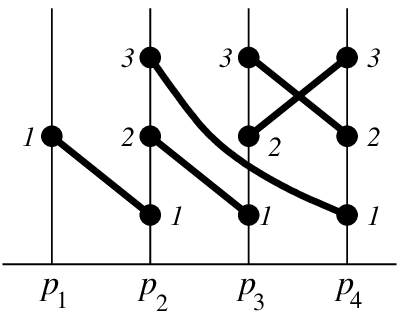}} & {\includegraphics[scale=0.9]{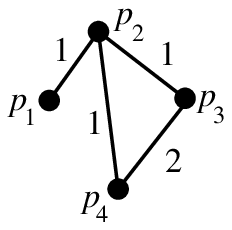}} \\
(a) & (b) & (c)
\end{tabular}
}
\caption{\label{fig:1} Two examples of web diagrams in the same web
  world. Web diagrams (a) and (b) in this figure both have a single edge
  between `pegs' $p_2$ and $p_4$.  However the heights of the
  endpoints differ in both diagrams.  In (a) the heights of this edge
  on pegs $p_2$ and $p_4$ are, respectively, 1 and 2. We represent
  this edge by the 4-tuple $(2,4,1,2)$.  In (b) the heights of this
  edge are, respectively, 3 and 1. The edge in this diagram is
  represented by the 4-tuple $(2,4,3,1)$.  The web graph of the web
  world (to be defined in Section \ref{sectwo}) containing the web diagrams is shown as (c).  }
\end{figure}

To every web world $W$ we associate two important matrices,
$\MMmatrix{(W)}{}{}$ and $\RRmatrix{(W)}{}$, called the {\it{web-colouring}}
and {\it{web-mixing}} matrices, respectively.  The entries of these
matrices are indexed by ordered pairs of web diagrams, and contain
information about colourings of the edges of the first web diagram
that yield the second diagram under a construction determined by a
colouring. 

In this paper we deepen our study of these recently defined structures, with the
goal of ascertaining general properties. Section \ref{sectwo} contains some
background and definitions necessary for the rest of the paper.  In
Section \ref{secthree} we introduce the black diamond product on formal power series and prove a decomposition theorem for the web matrices of web
worlds that may be partitioned into web worlds on disjoint peg sets.
In Section \ref{secfour} we give a new method for generating combinatorial identities by exploiting some physical properties of web worlds.

In Section \ref{secfive} we give two new results to explain the repeated entries
that one notices occurring in web matrices.  The first of theses
results concerns diagonal entries and shows that if the comparability
graphs of the decomposition posets of two web diagrams (that satisfy a
further technical condition) are the same, then the diagonal entries
on the web matrices for these two diagrams will be the same.  The
second result explains how the action of flipping a web diagram
upside-down combines with the colouring and reconstruction operations
on that same web diagram.  In Section \ref{secsix} we derive an expression for
the entries of the square of a web-colouring matrix in terms of the
entries of the web-colouring matrix, and give a combinatorial proof
that the web-mixing matrices are idempotent.

Finally, in Section \ref{secseven} we investigate a particular web world whose diagrams consist of only two pegs that have multiple edges between them. We show how they are related to indecomposable permutations and how calculating the entries of the web matrices for this web world reduces to a combinatorics on words problem.


\section{Definitions and terminology}\label{sectwo}
Let $\CC[[x]]$ be the ring of formal power series in the variable $x$ over the ring $\CC$.
Given $f\in \CC[[x]]$ we will denote by $[x^i] f$ the coefficient of $x^i$ in $f$ and we extend this notation to the multivariate case.
For integers $a,b \in \mathbb{N}$ let $a \vee b$ denote the maximum of $a$ and $b$.
If $a \leq b$, then let $[a,b]=\{a,a+1,\ldots,b\}$.

Although web diagrams were defined in Section 1, we give here the formal specification in order to remove any uncertainty surrounding their definition.

\begin{definition}\label{wddefn}
A {\it{web diagram}} on $n$ pegs having $m$ edges is a collection
$D=\{e_j=(a_j,b_j,c_j,d_j): 1\leq j \leq m\}$ of 4-tuples that satisfy
the following properties:
  \begin{enumerate}
  \item[(i)] $1\leq a_j < b_j \leq n$ for all $j \in [1,m]$.
  \item[(ii)] For $i \in [1,n]$ let $\peg_i(D)$ be the number
    of $j$ such that $a_j$ or $b_j$ equals $i$, that is, the number of
    edges in $D$ incident with peg $i$.  Then
the labels of the vertices on peg $i$ must be the first $\peg_i(D)$ positive integers, i.e.
\begin{align*}
\{d_j: b_j=i\} \cup \{c_j: a_j = i\} &= [1,\peg_i(D)],\\
\noalign{and no vertex on a peg can be both a left endpoint and a right endpoint of an edge, i.e.}
\{d_j: b_j=i\} \cap \{c_j: a_j = i\} &= \emptyset.
\end{align*}
  \end{enumerate}
  We write $\Pegs(D)=(\peg_1(D),\ldots,\peg_n(D))$.  The labels of the
  $\peg_i(D)$ vertices on peg $i$ when read from bottom to top are
  $(1,\ldots,\peg_i(D))$.
\end{definition}
Given a web diagram $D=\{e_j=(a_j,b_j,c_j,d_j): 1\leq j \leq m\}$,
the set of pegs that are incident with at least one edge is
$\PegSet(D) = \{a_1,\ldots,a_m,b_1,\ldots,b_m\}$.

\begin{figure}[h!]
\centerline{
\begin{tabular}{c@{\qquad\qquad\qquad}c}
\includegraphics[scale=0.45]{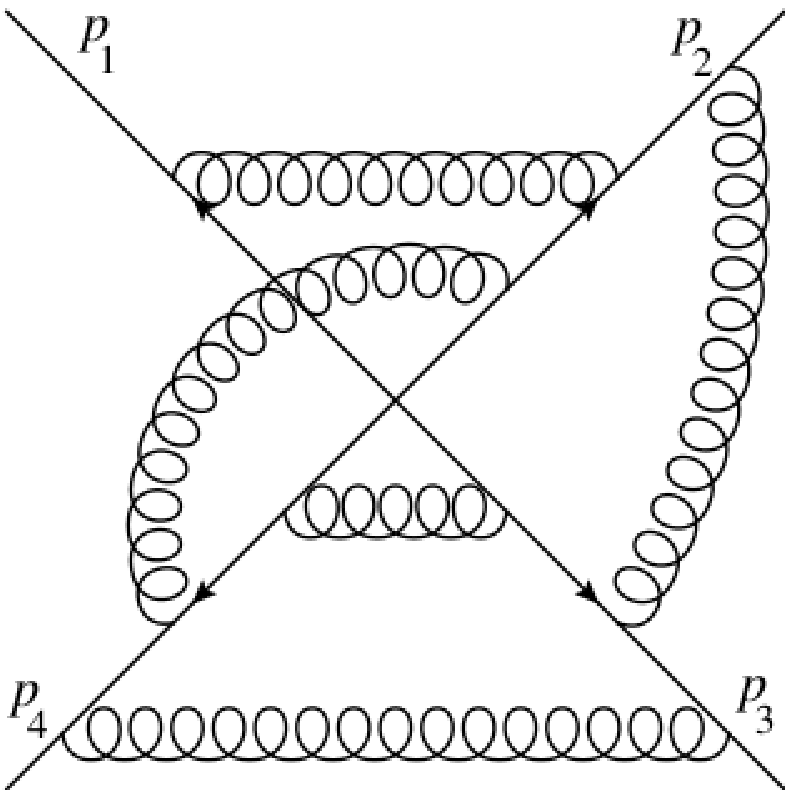} &
\includegraphics[scale=0.7]{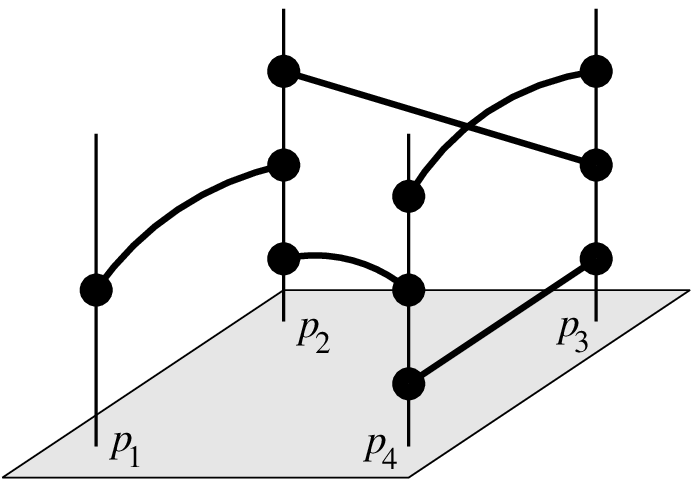} 
\end{tabular}
}
\caption{\label{fig:feyn} 
  The Feynman diagram corresponding to the web diagram of 
  Figure~\ref{fig:1}(a) is illustrated on the left. The
  gluon lines correspond to the edges of the web diagram. The heights
  of the endpoints correspond to their distance from the meeting points
  of the the 4 arrowed lines. The `pegs and plane' representation is shown to its
  right.  }
\end{figure}

A web diagram $D$ may be mapped to a simple graph $G([1,n],E)$ where $E(G)=\{\{a,b\}~:~ (a,b,c,d) \in D\}$.
Web worlds are in 1-1 correspondence with loop-free simple
graphs whose edges have labels in the set of positive integers.  
We call the labeled graph associated with a web world its {\it{web graph}}.
The vertices of the web graph correspond to the pegs of the web diagram 
(all vertices on a peg in the web diagram are projected to a 
single vertex in the web graph).
The edges of the web graph are labeled with the multiplicities of edges 
between the pegs in the web diagram.  
The web world that the web diagrams (a) and (b) of
Figure~\ref{fig:1} are members of is represented by the web graph (c).

\begin{definition}
  Let $D = \{e_j=(x_j,y_j,a_j,b_j): 1\leq j \leq m\}$ and
  $D'=\{e'_j=(x'_j,y'_j,a'_j,b'_j): 1\leq j \leq m'\}$ be two
  web diagrams with $\PegSet(D),\PegSet(D') \subseteq \{1,\ldots,n\}$.
  The {\textit{sum}} $D\oplus D'$ is the
  web diagram obtained by placing the diagram $D'$ on top of $D$;
$$
D\oplus D' ~=~ D \cup \{(x'_j,y'_j,a'_j + \peg_{x'_j}(D), b'_j +
\peg_{y'_j}(D)): 1\leq j \leq m'\}.
$$ 
If there exist two non-empty web diagrams $E$ and $F$ such that
$D=E \oplus F$ then we say that $D$ is {\it{decomposable}}.  Otherwise
we say that $D$ is {\it{indecomposable}}.
\end{definition}

\begin{example}\label{ex-diag-poset}
  Consider the following two web diagrams:
  $D_1=\{(1,4,1,1),(2,6,1,2),$ $(2,6,2,1)\}$ and
  $D_2=\{(1,2,1,1),(3,5,1,1),(5,6,2,1)\}$.  For $D_1$ we have
  $$(\peg_1(D_1),\ldots,\peg_6(D_1))=(1,2,0,1,0,2)$$ and so
  \begin{align*}
\lefteqn{D_1\oplus D_2} \\
  &= \{(1,4,1,1),(2,6,1,2),(2,6,2,1)\} ~ \cup ~ \left\{(1,2,1+\peg_1(D_1),1+\peg_2(D_1)),\right. \\
 & \qquad \left. (3,5,1+\peg_3(D_1),1+\peg_5(D_1)),(5,6,2+\peg_5(D_1),1+\peg_6(D_1))\right\} \\
  &= \{(1,4,1,1),(2,6,1,2),(2,6,2,1)\} ~\cup ~ \left\{(1,2,1+1,1+2),(3,5,1+0,1+0),\right.\\
& \qquad \left. (5,6,2+0,1+2)\right\} \\
  &= \{(1,4,1,1),(2,6,1,2),(2,6,2,1),(1,2,2,3),(3,5,1,1),(5,6,2,3)\}.
\end{align*}
\end{example}

A $k$-colouring $\alpha$ of a web diagram $D$ is an assignment of the
numbers $\{1,\ldots,k\}$ to the edges of $D$ such that the mapping
$\alpha:D \mapsto \{1,\ldots,k\}$ is surjective. In this instance we
write $|\alpha|=k$.  Let $\Colourings_k(W)$ be the set of
$k$-colourings of web diagrams in $W=W(D)$, and
$\Colourings(W)=\Colourings_1(W) \cup \Colourings_2(W) \cup \cdots$.

The construction procedure that we alluded to in the Introduction happens in the following way: a
$\ell$-colouring $\alpha$ of a web diagram $D$ produces $\ell$ new sub-web
diagrams $D_{\alpha}(1),\ldots,D_{\alpha}(\ell)$, each of which consists
of only the edges having that designated colour.  These $\ell$ diagrams
are not yet web diagrams, but we can relabel the heights of the
vertices on each of the pegs to make them into web diagrams.  Let
$\rel$ be this relabelling operation.  The new web diagram
$D'=\Recon{D}{\alpha}=\rel(D_{\alpha}(1))\oplus \cdots \oplus
\rel(D_{\alpha}(\ell))$ is formed by stacking the diagrams on top of one
another in increasing order of the colours.

The $(D_1,D_2)$ entries of the web-colouring and web-mixing matrices
of $W$ are:
\begin{align*}
\MMmatrix{(W)}{(x)}{D_1,D_2} = \sum_{\ell \geq 1} x^{\ell} f(D_1,D_2,\ell) 
\qquad
\mbox{ and }
\qquad
\RRmatrix{(W)}{D_1,D_2} = \sum_{\ell \geq 1} \dfrac{(-1)^{\ell-1}}{\ell} f(D_1,D_2,\ell),
\end{align*}
where $f(D_1,D_2,\ell)$ is the number of $\ell$-colourings $\alpha$ of
$D_1$ which give $\Recon{D_1}{\alpha}=D_2$.
As stated in the introduction, these web-mixing matrices occur in the
calculation of scattering amplitudes in QCD. More specifically, they
occur in the exponents of amplitudes, and dictate how the colour
charge and kinematic degrees of freedom of highly energetic quarks and
gluons are entangled by the radiation of additional lower energy
gluons.

\begin{example}\label{twopegsexamp}
Let $D_1=\{e_1=(1,2,1,2),e_2=(1,2,2,1)\}$.  The web world generated by
$D_1$ is $W(D_1)=\{D_1,D_2\}$ where
$D_2=\{e_1'=(1,2,1,1),e_2'=(1,2,2,2)\}$.  These are illustrated in
Figure~\ref{twopegs}.
\begin{figure}
\centerline{
\begin{tabular}{c@{\hspace*{3em}}c}
\includegraphics[scale=\retyup]{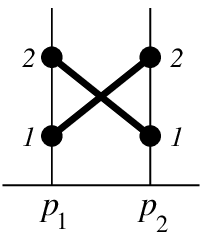} & \includegraphics[scale=\retyup]{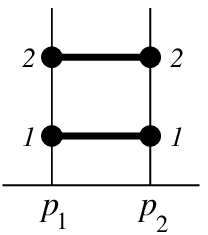}\\
$D_1$ & $D_2$
\end{tabular}
}
\caption{The two web diagrams of the web world in Example~\ref{twopegsexamp}.\label{twopegs}}
\end{figure}
There are three different colourings of $D_1$:
$$\begin{array}{l@{\quad}l@{\quad\Rightarrow\quad}l} \alpha (e_1)=1 &
  \alpha (e_2)=1 & \Recon{D_1}{\alpha}=D_1\\ \alpha (e_1)=1 & \alpha
  (e_2)=2 & \Recon{D_1}{\alpha}=D_2\\ \alpha (e_1)=2 & \alpha (e_2)=1
  & \Recon{D_1}{\alpha}=D_2
\end{array}$$
Consequently $\MMmatrix{(W)}{(x)}{D_1,D_1}=x^1$ and
$\MMmatrix{(W)}{(x)}{D_1,D_2}=2x^2$.  Likewise there are three
different colourings of $D_2$:
$$\begin{array}{ll@{\quad\Rightarrow\quad}l}
\alpha (e_1')=1 & \alpha (e_2')=1 & \Recon{D_2}{\alpha }=D_2\\
\alpha (e_1')=1 & \alpha (e_2')=2 & \Recon{D_2}{\alpha }=D_2\\
\alpha (e_1')=2 & \alpha (e_2')=1 & \Recon{D_2}{\alpha }=D_2
\end{array}$$
Consequently $\MMmatrix{(W)}{(x)}{D_2,D_1}=0$ and
$\MMmatrix{(W)}{(x)}{D_2,D_2}=x^1+2x^2$.  Therefore

$$\MMmatrix{(W)}{(x)}{} = 
\left(  \begin{matrix}
	x & 2x^2 \\
	0 & x+2x^2
	\end{matrix} \right)
\quad \mbox{ and }\quad
\RRmatrix{(W)}{} = 
\left(  \begin{matrix}
	1 & -1  \\
	0 & 0
	\end{matrix} \right).$$
\end{example}
The physical relevance of this example is that, owing to the second
row of this matrix having no non-zero entries, only diagram $D_1$
survives in the exponents of scattering amplitudes.  Further examples
of these definitions and details can be found in \cite[\S 2]{jcta}.

Every web diagram $D$ may be decomposed and written as a sum of
indecomposable web diagrams.  A poset (partially ordered set) $P$
on the set of these constituent indecomposable web diagrams can be defined as follows:

\begin{definition}
  Let $W$ be a web world and $D \in W$.  Suppose that $D=E_1 \oplus
  E_2 \oplus \cdots \oplus E_k$ where each $E_i$ is an indecomposable
  web diagram. 
  Let $P=\{E_1,\ldots,E_k\}$.
  Define the relation $<_1$ on $P \times P$ as follows:
  $E_i <_1 E_j$ if $i<j \mbox{ and } \PegSet(E_i)\cap \PegSet(E_j)\neq \emptyset$.
  Let $<_2$ be the transitive closure of $<_1$ on $P\times P$ and let $\preceq$ be the reflexive closure of $<_2$ on $P\times P$.
  We call $P(D)=(P,\preceq)$ the \emph{decomposition poset of $D$}.
\end{definition}

\begin{figure}
\centerline{\includegraphics{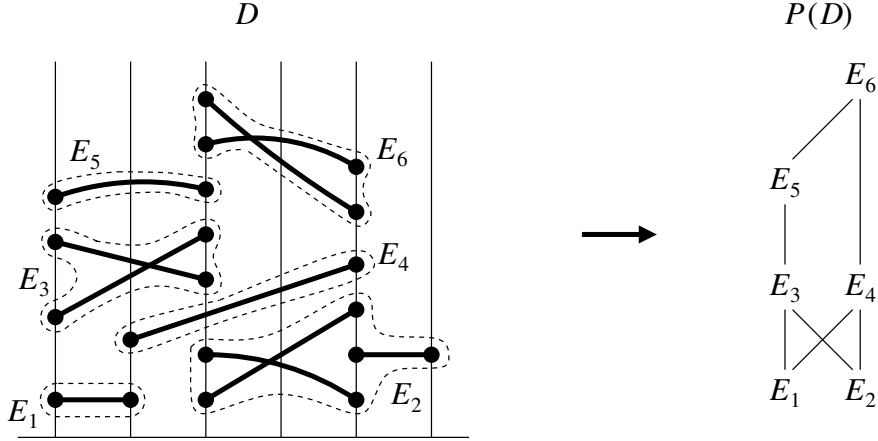}}
\caption{The decomposition poset of the web diagram in Example~\ref{wert}.\label{dec2examp}}
\end{figure}

\begin{example}\label{wert}
  Consider the web diagram $D$ given on the left in Figure~\ref{dec2examp}.  
  The poset $P(D)$ we get from this diagram illustrated to its right.
  The constituent indecomposable web diagrams are:
  $$\begin{array}{rcl} 
	E_1&=&\{(1,2,1,1)\} \\
  	E_2&=&\{(3,5,1,3),(3,5,2,1),(5,6,2,1)\}\\
  	E_3&=&\{(1,3,1,2),(1,3,2,1)\} \\ 
	E_4&=&\{(2,5,1,1)\}\\
  	E_5&=&\{(1,3,1,1)\}\\
	E_6&=&\{(3,5,1,2),(3,5,2,1)\}.
	\end{array}
	$$
\end{example}

In the following sections we will repeatedly come across the {\it{Fubini polynomials}} (also known as the {\it{ordered Bell polynomials}}) $\Fubini{n}(x) = \sum_{k=1}^n k! \stirlingsecond{n}{k} x^k$.


\section{A decomposition theorem for web matrices}\label{secthree}
The main result of this section is a theorem that explains how 
to form the web-colouring matrix of a web world that admits a decomposition as
a disjoint union of at least two other web worlds.
Physical arguments tell us that such webs do not contribute to the exponents of scattering amplitudes~\cite{Gardi:2010rn} and this is something we will discuss further and exploit in Section~\ref{secfour}.

In order to be able to state our main theorem, we must first define some new numbers 
and a new product on formal power series.

Let $\sNumb{k}{i_1,\ldots,i_m}$ be the number of 0-1 fillings of an 
$m$ row and $k$ column array such that there are precisely $i_1$ ones on the top row, 
$i_2$ ones in the second row, etc., and there are no columns of only zeros.

\begin{lemma}
$\sNumb{k}{i_1,\ldots,i_m}
=[u_1^{i_1} u_2^{i_2} \cdots u_m^{i_m}] \left((1+u_1)\cdots (1+u_m) - 1\right)^k $.
\end{lemma}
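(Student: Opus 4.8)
The plan is to interpret both sides as counting the same set of 0-1 fillings, using a generating-function bookkeeping of the row sums. First I would expand the product $\left((1+u_1)\cdots(1+u_m)-1\right)^k$ as a $k$-fold product of identical factors, one factor per column of the array. Expanding one factor $(1+u_1)\cdots(1+u_m)-1$ and recording which $u_r$-terms are selected corresponds exactly to choosing a nonzero 0-1 column vector of length $m$: selecting the $u_r$ summand (rather than $1$) from the $r$th binomial records a $1$ in row $r$ of that column, and the subtraction of $1$ removes precisely the all-zeros column. Thus a monomial contributing to the expansion of the $k$-fold product is in bijection with an ordered $k$-tuple of nonzero columns in $\{0,1\}^m$, i.e.\ with a 0-1 filling of the $m\times k$ array having no all-zero column.

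Next I would track the exponent of each $u_r$ across the whole product: $u_r$ appears with exponent equal to the number of columns in which row $r$ was assigned a $1$, which is exactly the number of ones in row $r$ of the filling. Hence the coefficient $[u_1^{i_1}\cdots u_m^{i_m}]$ of the expanded $k$-fold product counts exactly those fillings of the $m\times k$ array with no all-zeros column and with $i_r$ ones in row $r$ for each $r\in[1,m]$. By definition this count is $\sNumb{k}{i_1,\ldots,i_m}$, which gives the claimed identity.

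I do not anticipate a serious obstacle here; the only point requiring a little care is making the bijection between "select a summand from each of the $k$ factors" and "write down a 0-1 filling column by column" fully precise, and in particular verifying that the $-1$ in each factor cleanly implements the no-all-zero-column constraint without overcounting — i.e.\ that $(1+u_1)\cdots(1+u_m)-1 = \sum_{\emptyset \neq S \subseteq [1,m]} \prod_{r\in S} u_r$, so each factor contributes a genuine nonzero column and nothing else. After that the row-sum bookkeeping is a routine matching of exponents, and extracting the stated coefficient finishes the proof.
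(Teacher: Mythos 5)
Your proof is correct and follows essentially the same route as the paper: both treat $(1+u_1)\cdots(1+u_m)-1$ as the generating function for a single nonzero column (with the $-1$ excluding the all-zeros column) and raise it to the $k$th power, one factor per column, so that the exponent of $u_r$ records the number of ones in row $r$. Your version just makes the column-by-column bijection slightly more explicit; no difference in substance.
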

\begin{proof}
The generating function for an entry in row $j$ of the $m\times k$
array is $u_j^0+u_j^1 = 1+u_j$.  The only forbidden configuration in
the array is a column of all zeros, so the generating function for
columns is
$$\left(\prod_{j=1}^m (u_j^0+u_j^1) \right) - u_1^0 u_2^0\cdots u_m^0
= (1+u_1)\cdots (1+u_m) - 1.$$ Since there are no restrictions on
rows, the generating function for the $m\times k$ array is therefore
$$\left((1+u_1)\cdots (1+u_m) - 1\right)^k.$$ The number of arrays
with $i_1$ ones in the first row, $i_2$ ones in the second row, and so
on, is the coefficient of $u_1^{i_1} u_2^{i_2} \cdots u_m^{i_m}$ in the
above generating function:
\begin{align*}
\Numb{k}{i_1,\ldots,i_m} = [u_1^{i_1} u_2^{i_2} \cdots u_m^{i_m}]
\left((1+u_1)\cdots (1+u_m) - 1\right)^k.  
\end{align*}
\end{proof}

It is a simple exercise to check that
$\sNumb{k}{i_1} = \binom{k}{i_1}$ and 
$\sNumb{k}{i_1,i_2} = \binom{k}{k-i_1,k-i_2,i_1+i_2-k}$.
The general expression for $m=3$ is
\begin{align}
\Numb{k}{i_1,i_2,i_3} &= \binom{k}{i_1} \sum_{a=0}^{i_2} \binom{i_1}{a} \binom{k-i_1}{i_2-a} \binom{i_1+i_2-a}{k-i_3}.
\end{align}
We will now define a new product of power series that will both help in presenting our results concerning web worlds, but also serves in establishing a new automatic way to generate general combinatorial identities from web worlds.
The black diamond product $\webproduct$ is a binary operation that is both commutative and associative, and we define it in a very general form 
since this is how we will typically be using it.
\begin{definition}
Given $A^{(1)}(x),\ldots,A^{(m)}(x) \in \CC[[x]]$ where $A^{(k)}(x)=\sum_{n\geq 0} a^{(k)}_n x^n$,
we define the {\it{black diamond product}} of $A^{(1)}(x),\ldots,A^{(m)}(x)$ as:
$$A^{(1)}(x) \webproduct A^{(2)}(x) \webproduct \cdots \webproduct A^{(m)}(x)
= 
\sum_{k\geq 0} x^k
\sum_{i_1 \geq 0} \cdots \sum_{i_m \geq 0} 
a^{(1)}_{i_1} \cdots a^{(m)}_{i_m} 
\Numb{k}{i_1,\ldots,i_m}.$$
\end{definition}

Let us note that if the power series $A^{(i)}(x)$ are each polynomials of degree $n_i$, then the product may be written in the more computationally efficient form:

\begin{align}
A^{(1)}(x) \webproduct \cdots \webproduct A^{(m)}(x)
&=	
	\sum_{k=0}^{n_1+\cdots+n_m}
	x^k 
	\sum_{
	  \substack{
	  i_1 \in [0,n_1],\ldots, i_m \in [0,n_m] \\
	  i_1\vee\cdots\vee i_m \leq k \leq i_1+\cdots+i_m 
	  }}
	a^{(1)}_{i_1}\cdots a^{(m)}_{i_m} \Numb{k}{i_1,\ldots,i_m}.\label{easycomp}
\end{align}
The unit of the black diamond product is 1, i.e. $1 \webproduct A(x)=A(x)$.
To recap on our point before the definition, due to commutativity and associativity the order in which power series appear in the black diamond product does not change its outcome. 
In other words, given the power series $\{A^{(i)}(x)\}_{i \in [1,m]}$ and a permutation $\pi \in \Sym_m$, 
\begin{align}
A^{(\pi(1))}(x) \webproduct A^{(\pi(2))}(x) \webproduct \cdots \webproduct A^{(\pi(m))}
= A^{(1)}(x) \webproduct A^{(2)}(x) \webproduct \cdots \webproduct A^{(m)}(x).
\end{align}
We will abbreviate $\stackrel{m}{\overbrace{A(x) \webproduct \cdots \webproduct A(x)}}$ to $\webpower{A(x)}{m}$ with the convention that $\webpower{A(x)}{0}=1$.

\begin{example}\label{webprodexamp}
Suppose that $A^{(1)}(x)=\cdots=A^{(m)}(x)=x$. 
Then 
$$\webpower{x}{m} = \stackrel{m}{\overbrace{x \webproduct \cdots \webproduct x}} = \sum_{k\geq 0} x^k \Numb{k}{1,\ldots,1}$$
where there are $m$ ones on the right hand side. 
The value $\sNumb{k}{{1,\ldots,1}}$
is the number of ways to fill a table of $k$ columns and $m$ rows
with 0s and 1s such that there is exactly one 1 in every row and there are no columns of only 0s.
This is simply another way to encode an ordered set
partition of an $m$-set into $k$ sets (if $a$ is in the $i$th set of
such a sequence then the solitary 1 in row $a$ of the array will be
in the $i$th column). This number is $k! \stirlingsecond{m}{k}$ and
so for all $m \geq 1$,
\begin{align}
\webpower{x}{m} &= \sum_{k=1}^{m} x^k k! \stirlingsecond{m}{k} = \Fubini{m}{(x)}.\label{xm}
\end{align}
\end{example}

\begin{example}
Using the fact that $\sNumb{k}{i_1,i_2} = \binom{k}{k-i_1,k-i_2,i_1+i_2-k}$ and applying Equation~\ref{easycomp}, we have
$$(x+x^2) \webproduct (x+x^2) = x+7x^2+ 12x^3+6x^4.$$
Several of the power series that arise from taking the black diamond product of simple power series expressions appear to correspond to known sequences/power series. 
This suggests the black diamond product may be an object worthy of study in its own right. Some examples of these include:
$x \webproduct x^n = nx^n + (n+1)x^{n+1}$,
the coefficients in the power series $x^n \webproduct x^{n+1}$ appear to be given by ~\cite[A253283]{oeis}, and 
the sequence of coefficients of
$$x^n \webproduct  x^n = \sum_{k=0}^n \binom{n+k}{k} \binom{n}{k} x^{n+k}$$
is known to count several different structures (see \cite[A063007]{oeis}). 
(We omit the proof of this final observation since it is not immediately relevant to the paper's goal.)
\end{example}

We are now ready to state our main theorem.

\begin{theorem} \label{mixingthm}
Let $W_1,\ldots ,W_m$ be web worlds on pairwise disjoint peg sets.
Suppose that $D_i,D_i' \in W_i$ for all $i \in [1,m]$. 
Let $W=W_1\cup W_2 \cup \cdots \cup W_m$ be a new web world which is the disjoint union of $W_1,\ldots,W_m$.  
The diagrams $D=D_1\oplus \cdots \oplus D_m$ and $D'=D'_1 \oplus \cdots \oplus D'_m$ are web diagrams in $W$.
Then
$\MMmatrix{(W)}{(x)}{D,D'} \in \CC[[x]]$ where
\begin{align*}
\MMmatrix{(W)}{(x)}{D,D'} &= 
			\MMmatrix{(W_1)}{(x)}{D_1,D_1'} \webproduct \cdots \webproduct \MMmatrix{(W_m)}{(x)}{D_m,D_m'}.
\end{align*}
\end{theorem}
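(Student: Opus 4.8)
The plan is to establish the identity by a direct colouring-counting argument, matching the combinatorial meaning of the black diamond product (via the numbers $\sNumb{k}{i_1,\ldots,i_m}$) with the structure of colourings of a diagram that is a sum of diagrams on disjoint peg sets. The crucial first observation is that because the peg sets of $W_1,\ldots,W_m$ are pairwise disjoint, the sum $D=D_1\oplus\cdots\oplus D_m$ is really a disjoint union: the edges of $D_i$ live on pegs disjoint from those of $D_j$, so the $\rel$ and $\oplus$ operations on the blocks do not interact. Consequently, for any $k$-colouring $\alpha$ of $D$, the reconstruction $\Recon{D}{\alpha}$ decomposes blockwise, and $\Recon{D}{\alpha}=D'$ if and only if, after discarding the colours unused on block $i$ and order-preservingly relabelling the used ones, the induced colouring $\alpha_i$ of $D_i$ satisfies $\Recon{D_i}{\alpha_i}=D_i'$. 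This is the step I expect to require the most care: one has to check that stacking the relabelled monochromatic sub-diagrams of $D$ in increasing colour order produces exactly the blockwise stack of the relabelled monochromatic sub-diagrams of the $D_i$, which uses disjointness of peg sets in an essential way (colours shared between blocks get interleaved, but since the blocks occupy disjoint pegs the resulting web diagram is unchanged).

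Granting that reduction, I would count $f(D,D',k)$ as follows. A $k$-colouring $\alpha$ of $D$ that uses all $k$ colours is the same data as: for each block $i$, a subset $S_i\subseteq[1,k]$ of colours actually used on $D_i$, together with a surjective colouring of $D_i$ onto $S_i$ realising $\Recon{D_i}{\alpha_i}=D_i'$ once $S_i$ is order-isomorphically identified with $[1,|S_i|]$; subject to the global constraint $S_1\cup\cdots\cup S_m=[1,k]$ (surjectivity of $\alpha$). If $|S_i|=i_\ell$ (writing $i_\ell$ for the size used in block $\ell$), the number of colourings of block $\ell$ contributing is exactly $f(D_\ell,D_\ell',i_\ell)=[x^{i_\ell}]\MMmatrix{(W_\ell)}{(x)}{D_\ell,D_\ell'}$, and this count does not depend on \emph{which} $i_\ell$-subset $S_\ell$ is, only on its size. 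Hence
\begin{align*}
f(D,D',k)=\sum_{i_1,\ldots,i_m\geq 0}
\Bigl(\prod_{\ell=1}^m f(D_\ell,D_\ell',i_\ell)\Bigr)
\cdot\#\{(S_1,\ldots,S_m): |S_\ell|=i_\ell,\ \textstyle\bigcup_\ell S_\ell=[1,k]\}.
\end{align*}

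Finally I identify the last factor with $\sNumb{k}{i_1,\ldots,i_m}$: an $m$-tuple $(S_1,\ldots,S_m)$ of subsets of $[1,k]$ covering $[1,k]$ is exactly a $0$-$1$ filling of an $m\times k$ array with no all-zero column, where row $\ell$ is the indicator vector of $S_\ell$; the condition $|S_\ell|=i_\ell$ says row $\ell$ has $i_\ell$ ones, and covering $[1,k]$ says no column is all zeros. Substituting this and summing against $x^k$ gives
\begin{align*}
\MMmatrix{(W)}{(x)}{D,D'}
=\sum_{k\geq 0}x^k\sum_{i_1,\ldots,i_m\geq 0}
f(D_1,D_1',i_1)\cdots f(D_m,D_m',i_m)\,\sNumb{k}{i_1,\ldots,i_m},
\end{align*}
which is precisely the definition of $\MMmatrix{(W_1)}{(x)}{D_1,D_1'}\webproduct\cdots\webproduct\MMmatrix{(W_m)}{(x)}{D_m,D_m'}$, since $[x^{i_\ell}]\MMmatrix{(W_\ell)}{(x)}{D_\ell,D_\ell'}=f(D_\ell,D_\ell',i_\ell)$. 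The fact that $\MMmatrix{(W)}{(x)}{D,D'}\in\CC[[x]]$ (rather than being an ill-defined formal sum) follows because each coefficient of $x^k$ is a finite sum: $\sNumb{k}{i_1,\ldots,i_m}$ vanishes unless every $i_\ell\leq k$. The main obstacle, as noted, is the blockwise-reconstruction lemma; everything after that is bookkeeping about subsets covering $[1,k]$.
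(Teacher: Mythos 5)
Your proposal is correct and follows essentially the same route as the paper's proof: both reduce the count of $k$-colourings of $D$ reconstructing $D'$ to a product of blockwise counts times the number of ways to distribute the blocks' colour sets surjectively over $[1,k]$, identified with $0$-$1$ fillings of an $m\times k$ array with no all-zero column, i.e.\ with $\sNumb{k}{i_1,\ldots,i_m}$. You are slightly more explicit than the paper about the blockwise-reconstruction step (the paper passes over it via a chain-embedding picture), but the substance of the two arguments is identical.
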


\begin{proof}
Suppose that $\alpha_1$ is an $i_1$-colouring of $D_1$ which
constructs $D_1'$ (i.e. $\Recon{D_1}{\alpha}=D_1'$) and that $\alpha_2$ is a $i_2$-colouring of $D_2$
that constructs $D_2'$, and so on.  
An $i$-chain is a totally ordered set of $i$ elements.  
The number of ways to
$k$-colour the diagram $D=D_1\oplus \cdots \oplus D_m$ so that it
becomes $D'$ is the number of ways to embed the $m$ chains
$i_1$-chain, $i_2$-chain, $\ldots$, $i_m$ chain, which represent the
ordered colourings of $D_1,\ldots ,D_m$, respectively, into a
$k$-chain.  

The value of $k$ must be at least the length of the
largest of the $m$ chains, and of course must be at most the sum of
the lengths of all constituent $m$ chains, i.e.  $k \in
[\max(i_1,\ldots,i_m),i_1+\cdots+i_m]$.  Furthermore, this embedding
must be surjective, for otherwise at least one of the $k$-colours
$\{1,\ldots,k\}$ would not have any corresponding colour in the $m$
constituent chains.
One can recast this in the form of a tabular 0-1 filling problem where we have 
$m$ rows and $k$ columns whereby 
the column indices of the $i_r$ ones that appear in row $r$ of the table indicate the new colours 
that they take on in the $k$-colouring.
The surjectivity condition translates into there being no columns of all zeros.
The number of ways to do this is therefore $\sNumb{k}{i_1,\ldots,i_m}$.

Suppose that the entries of the web-colouring matrices that correspond to the
diagrams are
\begin{align*}
\MMmatrix{(W_i)}{(x)}{D_i,D_i'} ~=~ a^{(i)}_1 x+\cdots +a^{(i)}_{n_i} x^{n_i} ~=:~ A^{(i)}(x)
\end{align*}
for all $i \in [1,m]$.  
Since there are $a^{(j)}_{i_j}$ ways to colour $D_j$ to produce
$D'_j$ for all $j \in [1,m]$, the factor $a^{(1)}_{i_1}\cdots a^{(m)}_{i_m}$
must be included.  Therefore
\begin{align*}
\MMmatrix{(W)}{(x)}{D,D'}
 &= \sum_{i_1=1}^{n_1} \cdots \sum_{i_m=1}^{n_m} 
			a^{(1)}_{i_1}\cdots a^{(m)}_{i_m} 
			\sum_{k=i_1\vee \cdots\vee i_m}^{i_1+\cdots+i_m}
				x^k \Numb{k}{i_1,\ldots,i_m}\\
&= 
\MMmatrix{(W_1)}{(x)}{D_1,D_1'} \webproduct \cdots \webproduct \MMmatrix{(W_m)}{(x)}{D_m,D_m'}.
\end{align*}
\end{proof}

\begin{example}\label{onebyone}
For all $i \in [1,m]$, let $D_i=\{(2i-1,2i,1,1)\}$ be the web diagram
that consists of a single edge between pegs $2i-1$ and $2i$, and let
$W_i$ be the web world that consists of the single web diagram $D_i$.
Then $\MMmatrix{(W_i)}{(x)}{} = (x)$, a $1\times 1$ matrix.  Since the
conditions of Theorem~\ref{mixingthm} hold, 
the web-colouring matrix of
$W=\{D=D_1\oplus \cdots \oplus D_m\}$, is
\begin{align*}
\MMmatrix{(W)}{(x)}{} &~=~ \left(\sum_{k=1}^{m} x^k \Numb{k}{1,\ldots,1}\right)
~=~ \left( \Fubini{m}{(x)}\right),
\end{align*}
from Example~\ref{webprodexamp}.
\end{example}

\begin{corollary} With the same setup as in Theorem~\ref{mixingthm},
\begin{align*}
\trace \MMmatrix{(W)}{(x)}{} &= 
	\sum_{D_1 \in W_1} \cdots \sum_{D_m \in W_m}
	\MMmatrix{(W_1)}{(x)}{D_1,D_1}
	\webproduct \cdots \webproduct
	\MMmatrix{(W_m)}{(x)}{D_m,D_m}.
\end{align*}
\end{corollary}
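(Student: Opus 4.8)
The plan is to reduce the statement directly to Theorem~\ref{mixingthm} by writing the trace as the sum of the diagonal entries of $\MMmatrix{(W)}{(x)}{}$, re-indexing that sum over the Cartesian product $W_1\times\cdots\times W_m$, and then applying the theorem with $D_i'=D_i$ for every $i$. The only thing that needs to be checked before the theorem can be invoked is the book-keeping fact that the map $(D_1,\ldots,D_m)\mapsto D_1\oplus\cdots\oplus D_m$ is a bijection from $W_1\times\cdots\times W_m$ onto $W$.

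First I would establish that bijection. Since the peg sets $\PegSet$ underlying $W_1,\ldots,W_m$ are pairwise disjoint, in a sum $D_1\oplus\cdots\oplus D_m$ no actual stacking takes place between distinct summands: each $D_i$ occupies its own pegs and can be recovered from $D_1\oplus\cdots\oplus D_m$ by restricting to the pegs carrying $W_i$. This gives injectivity, hence uniqueness of the decomposition. For surjectivity, recall that the web world of a diagram is the set of all diagrams obtained by permuting the order in which vertices appear on the pegs; because every peg of $D_1\oplus\cdots\oplus D_m$ belongs to exactly one $W_i$, such a permutation amounts to independently permuting the vertex orders within each $D_i$, so every member of $W=W(D_1\oplus\cdots\oplus D_m)$ has the form $D_1'\oplus\cdots\oplus D_m'$ with $D_i'\in W_i$.

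With this in hand the computation is immediate: $\trace\MMmatrix{(W)}{(x)}{}=\sum_{D\in W}\MMmatrix{(W)}{(x)}{D,D}$, and re-indexing via the bijection turns the right-hand side into $\sum_{D_1\in W_1}\cdots\sum_{D_m\in W_m}\MMmatrix{(W)}{(x)}{D_1\oplus\cdots\oplus D_m,\,D_1\oplus\cdots\oplus D_m}$; applying Theorem~\ref{mixingthm} to each summand with $D_i'=D_i$ replaces it by $\MMmatrix{(W_1)}{(x)}{D_1,D_1}\webproduct\cdots\webproduct\MMmatrix{(W_m)}{(x)}{D_m,D_m}$, which is exactly the claimed identity. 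The main (and mild) obstacle is the bijection claim — in particular, one must genuinely use uniqueness of the decomposition $D=D_1\oplus\cdots\oplus D_m$ so that no diagonal entry of $\MMmatrix{(W)}{(x)}{}$ is double-counted or missed; after that the corollary is a pure substitution into the already-proven theorem.
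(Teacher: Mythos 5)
Your proof is correct and follows the route the paper intends: the corollary is stated there without proof, being treated as an immediate consequence of Theorem~\ref{mixingthm}, and your argument simply supplies the omitted bookkeeping. The bijection $(D_1,\ldots,D_m)\mapsto D_1\oplus\cdots\oplus D_m$ between $W_1\times\cdots\times W_m$ and $W$, which you justify via the disjointness of the peg sets, is exactly the point needed to re-index the trace before substituting the theorem with $D_i'=D_i$.
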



\section{A method for generating combinatorial identities}\label{secfour}
In this section we will outline a method for generating combinatorial identities 
by using the black diamond product in conjunction with a result concerning the trace of web-mixing matrices for disjoint web worlds.

\begin{proposition}\label{tracezero}
Let $W$ be a web world that is the disjoint union of at least two web worlds.
Then all entries of the web-mixing matrix $\RRmatrix{(W)}{}$ are zero, and consequently $\trace \RRmatrix{(W)}{}=0$.
\end{proposition}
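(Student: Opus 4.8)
The plan is to reduce the claim about the web-mixing matrix $\RRmatrix{(W)}{}$ to a statement about a single scalar generating function, and then show that generating function vanishes identically. Given a web world $W = W_1 \cup \cdots \cup W_m$ which is the disjoint union of $m \geq 2$ web worlds on pairwise disjoint peg sets, every web diagram in $W$ has the form $D = D_1 \oplus \cdots \oplus D_m$ with $D_i \in W_i$, and similarly $D' = D'_1 \oplus \cdots \oplus D'_m$. So it suffices to fix such a pair $(D, D')$ and show that the $(D,D')$ entry of $\RRmatrix{(W)}{}$ is zero. By the definition of the web-mixing matrix, that entry is $\sum_{\ell \geq 1} \frac{(-1)^{\ell-1}}{\ell} f(D, D', \ell)$, where $f(D,D',\ell)$ is the number of $\ell$-colourings of $D$ reconstructing $D'$. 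Thus I need to show this alternating sum vanishes.

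The key step is to obtain a clean closed form for $f(D,D',\ell)$ from the argument already carried out in the proof of Theorem~\ref{mixingthm}. There it is shown that if one writes $\MMmatrix{(W_i)}{(x)}{D_i,D_i'} = \sum_{j} a^{(i)}_j x^j$, then $f(D,D',\ell) = [x^\ell] \MMmatrix{(W)}{(x)}{D,D'}$ equals $\sum_{i_1,\ldots,i_m \geq 1} a^{(1)}_{i_1}\cdots a^{(m)}_{i_m} \sNumb{\ell}{i_1,\ldots,i_m}$. So the $(D,D')$ entry of $\RRmatrix{(W)}{}$ becomes
\begin{align*}
\sum_{i_1,\ldots,i_m \geq 1} a^{(1)}_{i_1}\cdots a^{(m)}_{i_m} \sum_{\ell \geq 1} \frac{(-1)^{\ell-1}}{\ell}\, \sNumb{\ell}{i_1,\ldots,i_m}.
\end{align*}
Since each factor power series has no constant term, all $i_r \geq 1$, and in particular $m \geq 2$ means we always have at least two positive row-sums. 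It therefore suffices to prove the numerical identity $\sum_{\ell \geq 1} \frac{(-1)^{\ell-1}}{\ell}\, \sNumb{\ell}{i_1,\ldots,i_m} = 0$ whenever $m \geq 2$ and all $i_r \geq 1$.

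To prove that identity I would use the generating-function formula $\sNumb{k}{i_1,\ldots,i_m} = [u_1^{i_1}\cdots u_m^{i_m}]\big((1+u_1)\cdots(1+u_m) - 1\big)^k$ from the Lemma. Writing $P(u) = (1+u_1)\cdots(1+u_m) - 1$, which has zero constant term, the sum $\sum_{k \geq 1} \frac{(-1)^{k-1}}{k} P(u)^k$ is exactly the formal power series expansion of $\log(1 + P(u)) = \log\big((1+u_1)\cdots(1+u_m)\big) = \log(1+u_1) + \cdots + \log(1+u_m)$. Extracting the coefficient of $u_1^{i_1}\cdots u_m^{i_m}$ from this separated sum: each summand $\log(1+u_r)$ involves only $u_r$, so its contribution to a monomial with $i_s \geq 1$ for some $s \neq r$ is zero. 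Hence when $m \geq 2$ and all $i_r \geq 1$ the coefficient is $0$, which is precisely the identity needed; summing against the coefficients $a^{(i)}_{i_i}$ gives that the $(D,D')$ entry vanishes, and since $(D,D')$ was arbitrary, all entries of $\RRmatrix{(W)}{}$ are zero, so in particular the trace is zero.

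The main obstacle is purely a matter of care rather than depth: one must make sure the interchange of the (finite, if the $A^{(i)}$ are polynomials — which they are, being entries of finite web-colouring matrices) sum over $i_1,\ldots,i_m$ with the $\ell$-sum is legitimate, and that the manipulation $\sum_k \frac{(-1)^{k-1}}{k}P^k = \log(1+P)$ is valid as an identity of formal power series in $u_1,\ldots,u_m$ — which it is, since $P$ has no constant term so only finitely many $k$ contribute to any fixed monomial. Everything else is the bookkeeping of pulling apart $\log$ of a product into a sum of logs.
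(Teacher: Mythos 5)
Your proof is correct, and it takes a genuinely different route from the paper: the paper gives no mathematical proof of Proposition~\ref{tracezero} at all, instead deferring to the statistical-physics (replica-trick) argument of Gardi et al.\ and recording only the physical intuition. Your argument is a self-contained combinatorial proof assembled from the paper's own ingredients. Theorem~\ref{mixingthm} expresses the $(D,D')$ entry of $\MMmatrix{(W)}{(x)}{}$ as a black diamond product, so the corresponding entry of $\RRmatrix{(W)}{}$ becomes a finite sum of products of coefficients weighted by $\sum_{\ell\geq 1}\tfrac{(-1)^{\ell-1}}{\ell}\sNumb{\ell}{i_1,\ldots,i_m}$; the generating-function lemma for $\sNumb{k}{i_1,\ldots,i_m}$ then identifies that inner sum as the coefficient of $u_1^{i_1}\cdots u_m^{i_m}$ in $\log\bigl((1+u_1)\cdots(1+u_m)\bigr)=\sum_{r}\log(1+u_r)$, which vanishes whenever at least two of the $i_r$ are positive --- exactly what is forced by $m\geq 2$ together with the fact that web-colouring matrix entries have no constant term. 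The interchange of sums is harmless for the reason you give: each $\sNumb{\ell}{i_1,\ldots,i_m}$ vanishes for $\ell>i_1+\cdots+i_m$, so everything is a finite polynomial identity in each multidegree. What your route buys is substantial: it removes the only non-combinatorial input to Section~4, so the identity-generating machinery of Theorem~\ref{genidents} becomes a theorem of pure combinatorics rather than one conditional on a physics derivation; it also proves the stronger entrywise statement directly rather than only the trace identity. The single point worth making explicit is that, because the peg sets are pairwise disjoint, every diagram of $W$ decomposes uniquely as $D_1\oplus\cdots\oplus D_m$ with $D_i\in W_i$; this is implicit in the statement of Theorem~\ref{mixingthm} but is the step that lets you treat an arbitrary entry of the matrix.
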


A statistical physics-based proof of this result has
been given in Gardi et al.~\cite[Section 5.1]{Gardi:2010rn}. The
essential physical idea, elaborated further in~\cite{Gardi:2013ita},
is that exponents of scattering amplitudes can only contain
interactions between non-disjoint groups of quarks and gluons.

The relationship between the web-mixing matrices and web-colouring matrices is
\begin{align}\label{rtom}
\RRmatrix{(W)}{D,D'} = \int_{-1}^{0} \dfrac{\MMmatrix{(W)}{(x)}{D,D'}}{x} dx.
\end{align}
Integrating formal power series can be a contentious issue, so let
us be clear that the integrals we perform in this paper are definite integrals and always correspond to the
transformation: $$a_1x+a_2x^2+a_3x^3+a_4x^4+\cdots \quad\mapsto\quad a_1-a_2/2 + a_3/3 -a_4/4 +\cdots.$$

\newcommand{\Amatrix}{M}
Suppose that $W_1,\ldots,W_n$ are web worlds on disjoint peg sets that all have the same web-colouring matrix $\Amatrix = \MMmatrix{(W_i)}{(x)}{}$. 
(This happens for example when all web worlds are equivalent to one another by relabeling the peg sets that they are defined upon.) 
Suppose further that the diagonal entries of $\Amatrix$ are $(G_1(x),\ldots,G_t(x))$ and that $$\{G_1(x),\ldots,G_t(x)\}=\{H_1(x),\ldots,H_s(x)\}$$ where each of the $H_i(x)$ are distinct and have multiplicities $(h_1,\ldots,h_s)$ as diagonal entries in $\Amatrix$. 
Then
\begin{align}
\trace \MMmatrix{(W)}{(x)}{} 
&= 
	\sum_{D_1 \in W_1} \cdots \sum_{D_m \in W_m}
	\MMmatrix{(W_1)}{(x)}{D_1,D_1}
	\webproduct \cdots \webproduct
	\MMmatrix{(W_m)}{(x)}{D_m,D_m}\nonumber\\
&= 	\sum_{(i_1,\ldots,i_m) \in [1,t]^m}
	G_{i_1}(x) \webproduct \cdots \webproduct G_{i_m}(x)\nonumber\\
&= 	\sum_{(j_1,\ldots,j_m) \in [1,s]^m}
	h_{j_1} \cdots h_{j_m}
	H_{j_1}(x) \webproduct \cdots \webproduct H_{j_m}(x)\nonumber\\
&=	\sum_{\substack{a_1,\ldots,a_s \geq 0\\ a_1+\cdots+a_s=m}}
	h_1^{a_1} \cdots h_s^{a_s} \binom{m}{a_1,\ldots,a_s}
	\webpower{H_1(x)}{a_1} \webproduct \cdots \webproduct \webpower{H_s(x)}{a_s}.\label{tracegeneral}
\end{align}
For the case that there are only two different power series that appear as diagonal entries of $\Amatrix$, we have
\begin{align}
\trace \MMmatrix{(W)}{(x)}{}
&= \sum_{a=0}^m 
	h_1^a h_2^{m-a} \binom{m}{a}
	\webpower{H_1(x)}{a} \webproduct \webpower{H_2(x)}{m-a}. \label{tracetwo}
\end{align}

\begin{theorem}\label{genidents}
Let $W$ be a web world whose web-colouring matrix has 
$s$ different diagonal entries $(H_1(x),\ldots,H_s(x))$ that appear with 
multiplicities $(h_1,\ldots,h_s)$. Then for all positive integers $m$, we have
\begin{align*}
	\sum_{\substack{a_1,\ldots,a_s \geq 0\\ a_1+\cdots+a_s=m}}
	h_1^{a_1} \cdots h_s^{a_s} \binom{m}{a_1,\ldots,a_s}
	\int_{-1}^{0}
	\webpower{H_1(x)}{a_1} \webproduct \cdots \webproduct \webpower{H_s(x)}{a_s}
	\dfrac{dx}{x} &=0.
\end{align*}
The expression for the $s=2$ case is:
\begin{align*}
	\sum_{a=0}^m 
	h_1^a h_2^{m-a} \binom{m}{a}
	\int_{-1}^0 \webpower{H_1(x)}{a} \webproduct \webpower{H_2(x)}{m-a}
	\dfrac{dx}{x}
	=0.
\end{align*}
\end{theorem}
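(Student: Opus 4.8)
The plan is to assemble Theorem~\ref{genidents} from the three ingredients that have already been laid out: the trace decomposition~\eqref{tracegeneral}, the vanishing statement of Proposition~\ref{tracezero}, and the integral transform~\eqref{rtom} linking $\RRmatrix{}{}$ to $\MMmatrix{}{}$. First I would instantiate the construction preceding~\eqref{tracegeneral}: take $n=m$ copies $W_1,\ldots,W_m$ of the single given web world $W$, placed on pairwise disjoint peg sets (possible since a web world is determined only up to relabeling of its pegs), so that each $\MMmatrix{(W_i)}{(x)}{}$ equals the web-colouring matrix of $W$ and hence has diagonal entries realising the multiset $\{H_1(x),\ldots,H_s(x)\}$ with multiplicities $(h_1,\ldots,h_s)$. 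Let $W^{\cup} = W_1 \cup \cdots \cup W_m$ be their disjoint union. Then~\eqref{tracegeneral}, which is a direct consequence of Theorem~\ref{mixingthm} and its Corollary together with commutativity/associativity of $\webproduct$ and the multinomial bookkeeping, gives
\[
\trace \MMmatrix{(W^{\cup})}{(x)}{} =
\sum_{\substack{a_1,\ldots,a_s \geq 0\\ a_1+\cdots+a_s=m}}
h_1^{a_1}\cdots h_s^{a_s}\binom{m}{a_1,\ldots,a_s}
\webpower{H_1(x)}{a_1}\webproduct\cdots\webproduct\webpower{H_s(x)}{a_s}.
\]

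Next I would apply the integral transform. Relation~\eqref{rtom} holds entrywise, so summing over the diagonal gives $\trace \RRmatrix{(W^{\cup})}{} = \int_{-1}^{0}\bigl(\trace \MMmatrix{(W^{\cup})}{(x)}{}\bigr)/x\,dx$, using linearity of the (formal) definite integral described after~\eqref{rtom}; one should note here that the trace is a polynomial in $x$ with zero constant term, so the transform is well-defined. Since $m\geq 1$ and we have taken $m$ copies, if $m\geq 2$ the web world $W^{\cup}$ is a disjoint union of at least two web worlds and Proposition~\ref{tracezero} forces $\trace \RRmatrix{(W^{\cup})}{}=0$; combining this with the displayed trace formula and pulling the $\int_{-1}^0(\cdot)/x\,dx$ inside the finite sum (again by linearity) yields exactly the claimed identity. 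For the degenerate case $m=1$ one has to be slightly careful: the construction then produces only one copy and Proposition~\ref{tracezero} does not apply, so the $m=1$ instance of the identity must be checked directly — but it just reads $\sum_{j} h_j \int_{-1}^0 H_j(x)/x\,dx = \int_{-1}^0 (\trace M)/x\,dx = \trace \RRmatrix{(W)}{}$, which need not be zero, so in fact the statement should be (and presumably is) intended for $m$ large enough that $W^{\cup}$ genuinely decomposes, i.e. $m\geq 2$; I would flag this and state the theorem for $m\geq 2$, or observe that $m\geq 1$ copies of a world already on more than one peg is moot. The $s=2$ specialisation is then immediate by restricting the multi-index sum to $(a,m-a)$ and writing $\binom{m}{a,m-a}=\binom{m}{a}$.

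The only genuine content beyond bookkeeping is the passage from~\eqref{tracezero} (a physically-motivated input we are permitted to assume) through the entrywise integral identity~\eqref{rtom}; everything else is formal manipulation of the $\webproduct$-product, whose commutativity and associativity were asserted in Section~\ref{secthree} and which let us collect the $t^m$ ordered tuples $(G_{i_1},\ldots,G_{i_m})$ into $\binom{m}{a_1,\ldots,a_s}$-sized blocks indexed by composition $(a_1,\ldots,a_s)$ of $m$. The main obstacle I anticipate is therefore not difficulty but \emph{rigour of the formal-integral step}: one must confirm that $\trace\MMmatrix{(W^{\cup})}{(x)}{}$ is an honest polynomial (or at least a power series with vanishing constant term) so that dividing by $x$ and applying the term-by-term transformation $a_k x^k \mapsto (-1)^{k-1}a_k/k$ is legitimate and commutes with the finite outer sum; this follows because each $\MMmatrix{(W_i)}{(x)}{D_i,D_i}$ is a polynomial with zero constant term (it is a sum $\sum_{\ell\ge1} x^\ell f(D_i,D_i,\ell)$ over finitely many $\ell$), and the black diamond product of such polynomials is again such a polynomial by~\eqref{easycomp}. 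Once that is in place the proof is a two-line substitution.
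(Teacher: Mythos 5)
Your proposal is correct and follows essentially the same route as the paper, which derives Theorem~\ref{genidents} directly from the trace decomposition~\eqref{tracegeneral}, Proposition~\ref{tracezero} applied to the disjoint union of $m$ relabelled copies of $W$, and the entrywise integral transform~\eqref{rtom}; the paper gives no separate proof beyond this assembly, so your write-up supplies exactly the intended argument. Your caveat about $m=1$ is a genuine and correct observation rather than a flaw in your proof: Proposition~\ref{tracezero} needs at least two disjoint constituents, and indeed the $m=1$ instance fails already for the paper's own first example (where the identity~\eqref{firstnewidentity} evaluates to $1$, not $0$, at $m=1$), so the theorem should be read with $m\geq 2$.
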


We will apply the above theorem to two extremely simple web worlds to see the combinatorial identities that emerge.

\begin{example}
Let $W$ be one of the web worlds of Example~\ref{onebyone} so that
$\MMmatrix{(W)}{(x)}{} = (x)$, a $1 \times 1$ matrix.
Applying Theorem~\ref{genidents} we have
$s=h_1=1$ and $H_1(x)=x$.
From Equation~\ref{xm} we have
$$\webpower{H_1(x)}{m} = \sum_{k=1}^m x^k k!\stirlingsecond{m}{k}$$
and so
\begin{align*}
\int_{-1}^0 \webpower{H_1(x)}{m} \dfrac{dx}{x} 
&=	\left[ \sum_{k=1}^m \dfrac{x^{k}}{k} k! \stirlingsecond{m}{k} \right]^0_{-1} \\
&=  \sum_{k=1}^m {(-1)^{k+1}} (k-1)! \stirlingsecond{m}{k} .
\end{align*}
This gives us the identity:
\begin{align}
\sum_{a_1=m} 1^{a_1} \binom{m}{a_1} \int_{-1}^0 \webpower{H_1(x)}{m} \dfrac{dx}{x} =
& \sum_{k=1}^m {(-1)^{k+1}} (k-1)! \stirlingsecond{m}{k} ~=~0.\label{firstnewidentity}
\end{align}
This identity can also be found in \cite[Eqn. 27]{wolfram}.
\end{example}

\begin{example}
Let $W$ be the web world of Example~\ref{twopegsexamp}, with
$$\MMmatrix{(W)}{(x)}{} =
\left(  \begin{matrix}
x & 2x^2 \\
0 & x+2x^2
\end{matrix} \right).$$

In this case $s=2$, $H_1(x)=x$, $H_2(x)=x+2x^2$, and $h_1=h_2=1$.
From Equation~\ref{xm}, we have $\webpower{H_1(x)}{m} =\Fubini{m}{(x)}$.
One can show that $\webpower{H_2(x)}{m}=\Fubini{2m}{(x)}$ and then
applying Theorem~\ref{genidents} we have
\begin{align*}
\webpower{H_1(x)}{a} \webproduct \webpower{H_2(x)}{m-a}
&= \sum_{k=0}^{2m-a} x^k \sum_{i_1,i_2} i_1! \stirlingsecond{a}{i_1} i_2!
	\stirlingsecond{2m-2a}{i_2} \Numb{k}{i_1,i_2} \\
&= \sum_{k=0}^{2m-a} x^k \sum_{i_1,i_2} i_1! \stirlingsecond{a}{i_1} i_2!
 \stirlingsecond{2m-2a}{i_2} \binom{k}{k-i_1,k-i_2,i_1+i_2-k}.
\end{align*}
This gives us the identity
\begin{align}
	\sum_{a=0}^m 
	\sum_{k=1}^{2m-a} 
	\sum_{i_1,i_2} 
	\binom{m}{a}
	\dfrac{(-1)^{k+1}}{k} 
	i_1! i_2! \stirlingsecond{a}{i_1}
 	\stirlingsecond{2m-2a}{i_2} \binom{k}{k-i_1,k-i_2,i_1+i_2-k}
	=0.
\end{align}
We do not know if this is a known identity.
\end{example}


\section{Repeated entries in web matrices}\label{secfive}
One observation that is apparent when calculating the web matrices (be
they web-mixing or web-colouring) of web worlds is that there are a
large number of entries that are the same. A deeper understanding of
this property would simplify the calculation of web-mixing matrices,
and has even led, for certain families of web diagram, to obtaining
the web-mixing matrix for arbitrary numbers of gluons~\cite{jhep}. In
this section we prove two new results to explain some of these
repetitive entries.  The first theorem gives one explanation for
repeated entries found on the diagonal of a web-colouring matrix, and also explains the same for the web-mixing matrix since the entries of the latter are a simple integral transformation of those in the web-colouring matrix.
This theorem also builds on our earlier result \cite[Thm. 3.4]{jcta}.

Given a poset $P=(P,\prec)$, its {\it{comparability graph}} $\comp(P)$
is the graph whose vertices are the elements of $P$, with $x,y \in P$
adjacent in $\comp(P)$ if $x \prec y$ or $y \prec x$.

\begin{theorem}
Let $D$ and $D'$ be web diagrams in a web world $W$ with 
\begin{align*}
D &= E_1 \oplus \cdots \oplus E_k \quad \mbox{and}\quad
D' = E'_1 \oplus \cdots \oplus E'_{k'},
\end{align*}
where each of the constituent diagrams $E_i$ and $E'_i$ are
indecomposable.
Suppose that the members of $(E_1,\ldots,E_k)$ are distinct and the members of
$(E'_1,\ldots,E'_{k'})$ are also distinct.  
If $\comp(P(D)) = \comp(P(D'))$, then
$\MMmatrix{(W)}{(x)}{D,D} = \MMmatrix{(W)}{(x)}{D',D'}$.
\end{theorem}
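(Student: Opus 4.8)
The plan is to reduce the diagonal entry $\MMmatrix{(W)}{(x)}{D,D}$ to a numerical invariant of the decomposition poset $P(D)$, and then to show that this invariant depends only on the comparability graph $\comp(P(D))$.

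First I would establish, following \cite[Thm.~3.4]{jcta} (whose hypotheses are exactly the present distinctness assumption), the identity
\[
\MMmatrix{(W)}{(x)}{D,D} \;=\; \sum_{\ell\ge 1} f(D,D,\ell)\,x^{\ell},
\qquad
f(D,D,\ell) \;=\; \#\{\,\text{surjective monotone }c\colon P(D)\to[1,\ell]\,\},
\]
where monotone means $E_i\preceq E_j\Rightarrow c(E_i)\le c(E_j)$. The combinatorial point is that any $\ell$-colouring $\alpha$ with $\Recon{D}{\alpha}=D$ must colour each indecomposable constituent $E_i$ monochromatically --- otherwise the relabelling $\rel$ would display $E_i$ as a nontrivial $\oplus$-sum, contradicting indecomposability --- so $\alpha$ induces a surjection $c$ of $\{E_1,\dots,E_k\}$ onto $[1,\ell]$; and since reconstruction stacks colour classes in increasing order of colour, while $\oplus$ does not see the order of pieces sharing no peg, one checks that $\Recon{D}{\alpha}=D$ holds precisely when $c(E_i)\le c(E_j)$ whenever $E_i<_1 E_j$, equivalently whenever $E_i\preceq E_j$. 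Distinctness of the $E_i$ is exactly what makes $\alpha\mapsto c$ a bijection: were two constituents equal, one could swap their colours and still reconstruct $D$, producing non-monotone surjections (this is already visible in Example~\ref{twopegsexamp}, where $\MMmatrix{(W)}{(x)}{D_2,D_2}=x+2x^2$, not $x+x^2$). Hence $\MMmatrix{(W)}{(x)}{D,D}$ is determined by, and determines, the order polynomial $\Omega_{P}(n):=\#\{\,\text{monotone }c\colon P\to[1,n]\,\}$ of $P=P(D)$, via $\Omega_P(n)=\sum_{\ell\ge 1}\binom{n}{\ell}f(D,D,\ell)$ and Mobius inversion.

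It then remains to prove that $\Omega_P$ depends only on $\comp(P)$; given this, $\comp(P(D))=\comp(P(D'))$ forces $\Omega_{P(D)}=\Omega_{P(D')}$ and hence $\MMmatrix{(W)}{(x)}{D,D}=\MMmatrix{(W)}{(x)}{D',D'}$. For this I would invoke the classical structure theory of comparability graphs (Gallai): any two posets with the same comparability graph are obtained from one another by a finite sequence of reversals of autonomous subsets, an autonomous subset (module) $M$ being one such that every element outside $M$ relates in the same way to all of $M$. So it suffices to check that $\Omega$ is unchanged on passing from $P$ to the poset $P^{\sharp}$ obtained by reversing the order on one autonomous $M$ while keeping relations between $M$ and $P\setminus M$ fixed. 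Given a monotone $f\colon P\to[1,n]$, put $a=\min f(M)$, $b=\max f(M)$; autonomy of $M$ means each $v\notin M$ is constrained, relative to $M$, only by $a$ (if $v$ lies below all of $M$), only by $b$ (if $v$ lies above all of $M$), or not at all, and these relations agree in $P$ and $P^{\sharp}$. Grouping monotone maps by $(a,b)$ and by $f|_{P\setminus M}$ gives
\[
\Omega_P(n) \;=\; \sum_{1\le a\le b\le n} m_M(a,b)\,N(a,b),
\]
where $m_M(a,b)$ counts monotone maps $M\to[a,b]$ attaining both endpoints and $N(a,b)$, the number of admissible extensions to $P\setminus M$, depends only on $(a,b)$ and the ``outside'' structure shared by $P$ and $P^{\sharp}$. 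The reflection $t\mapsto a+b-t$ of $[a,b]$ is a bijection from such maps for $M$ to such maps for $M^{\sharp}$, so $m_M(a,b)=m_{M^{\sharp}}(a,b)$; hence $\Omega_P=\Omega_{P^{\sharp}}$ and the theorem follows.

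The main obstacle will be the careful version of the first step: proving that $\Recon{D}{\alpha}=D$ forces every $E_i$ monochromatic, and then matching monochromatic colourings bijectively with monotone surjections of $P(D)$, while tracking peg-by-peg how $\rel$ interacts with the $\oplus$-decomposition and pinpointing where distinctness is used. Taking \cite[Thm.~3.4]{jcta} as a black box removes this, leaving only the bookkeeping of the module-reversal argument, whose one delicate point is the claim that the contribution $N(a,b)$ of $P\setminus M$ depends on $f|_M$ solely through $(\min,\max)$ --- exactly where autonomy of $M$ is used.
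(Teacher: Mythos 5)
Your proposal is correct, and its first half coincides with the paper's argument: both reduce the diagonal entry to the count of surjective monotone maps from $P(D)$ onto $[1,\ell]$ (using indecomposability to force each $E_i$ to be monochromatic and distinctness to make the colouring-to-map correspondence bijective), and then pass to the order polynomial $\Omega$ by binomial inversion, exactly as in \cite[Lemma 3.3, Thm.~3.4]{jcta}. Where you diverge is the final, decisive step that $\Omega_P$ is a comparability invariant: the paper disposes of this with a single citation to Stanley's \emph{Two Poset Polytopes} \cite[Cor.~4.4]{stanley_dcg}, which derives the invariance from the transfer map between the order polytope and the chain polytope (the latter visibly depending only on $\comp(P)$), whereas you give the classical module-reversal proof --- invoke the Gallai-type theorem that two posets with the same comparability graph are linked by a sequence of reversals of autonomous subsets, and check invariance under one reversal by conditioning on $(\min f(M),\max f(M))$ and reflecting $t\mapsto a+b-t$ on $M$. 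Your reflection argument is sound (autonomy is exactly what makes the outside contribution $N(a,b)$ depend on $f|_M$ only through $a$ and $b$, and incomparable constituents of $P(D)$ automatically live on disjoint peg sets, so stacking any linear extension reproduces $D$). Neither route is free: Stanley's corollary hides the polytope machinery, while yours hides Gallai's decomposition theorem; your version makes the combinatorial mechanism of the invariance more transparent, at the cost of a longer write-up, and the paper's version is shorter and comes bundled with the related fact that the number of linear extensions is likewise a comparability invariant.
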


\begin{proof}
First let us consider the web diagram $D=E_1 \oplus \cdots \oplus E_k$
where each of the diagrams $E_i$ is indecomposable.  All
$\ell$-colourings $\alpha$ such that $\Recon{D}{\alpha}=D$ must have
the property that $\alpha (e)=\alpha (e')$ whenever $e,e' \in E_i$
because $E_i$ is indecomposable.  Thus $\alpha$ must be a surjective
map from $\{E_1,\ldots,E_k\}$ to $[1,\ell]$.  Equivalently $\alpha$
surjectively maps the $k$ elements of the decomposition poset $P(D)$ to
the total order on $[1,\ell]$.  Let us call the number of these
surjective maps $\Theta(P(D),\ell)$.  This number is related to the
number $\Omega(P(D),\ell)$ of order preserving maps from the poset
$P(D)$ to $[1,\ell]$ via an inclusion-exclusion argument:
\begin{align*}
\Omega(P(D),\ell) &= \sum_k \binom{\ell}{k} (-1)^{\ell-k} \Omega(P(D),\ell).
\end{align*}
(See ~\cite[Lemma 3.3]{jcta} for further details.)
Using this we have
\begin{align}
\MMmatrix{(W)}{(x)}{D,D} &= \sum_{\ell \geq 0} \Theta(P(D),\ell) x^{\ell} = \sum_{\ell \geq 0} x^{\ell} \sum_{k} \binom{\ell}{k} \Omega(P(D),k). \label{done}
\end{align}
The same is true for $D'$, so we have
\begin{align}
\MMmatrix{(W)}{(x)}{D',D'} &= \sum_{\ell \geq 0} x^{\ell} \sum_{k} \binom{\ell}{k} \Omega(P(D'),k).\label{dpone}
\end{align}
If $\comp(P(D))=\comp(P(D'))$ then by Stanley~\cite[Cor. 4.4]{stanley_dcg} we have that $\Omega(P(D))=\Omega(P(D'))$, and consequently the expressions in Equations~\ref{done} and ~\ref{dpone} are the same. 
\end{proof}

It is not immediately obvious that the previous theorem does indeed
help with enlarging the collection of diagrams that are known to have the same
diagonal entries in a web-colouring matrix. We give here a non-trivial
example, and one which cannot be discovered by our subsequent Theorem~\ref{riccar}.

\begin{example}\label{posethasseexample}
Let
$D=\{(1,2,1,1),(1,3,2,1),(1,4,3,1),(3,5,2,3),(5,6,2,1),(5,7,1,1)\}$
and
$D'=\{(1,2,1,1),(1,3,2,1),(1,4,3,1),(2,7,2,3),(6,7,1,2),(5,7,1,1)\}$.
The Hasse diagrams for $P(D)$ and $P(D')$ are illustrated in
Figure~\ref{Deed} and are clearly different.  However we have that
$\comp(P(D))=\comp(P(D'))=G$ and we can conclude that
$\MMmatrix{(W)}{(x)}{D,D}=\MMmatrix{(W)}{(x)}{D',D'}$.\\[1em]
\begin{figure}
\centerline{
\def\myscale{0.8}
\def\myother{1cm}
\begin{tabular}{c@{\hspace*{\myother}}c}
\includegraphics[scale=\myscale]{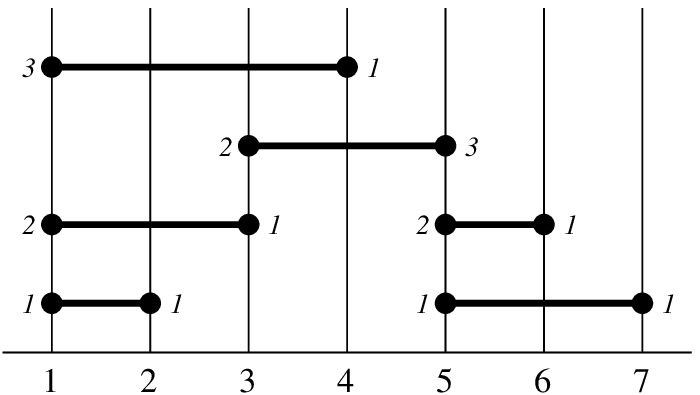} &
\includegraphics[scale=\myscale]{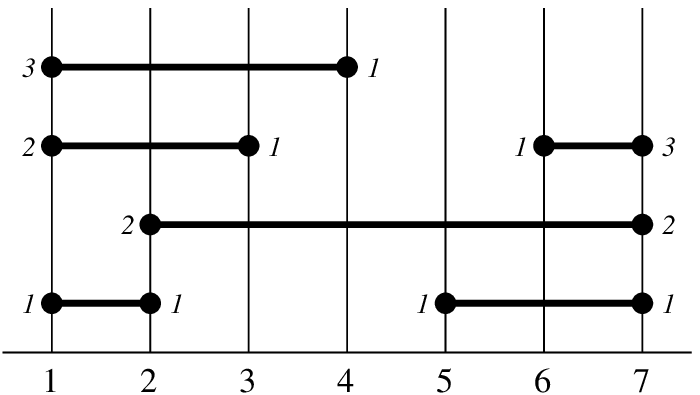} \\
$D$ & $D'$ \\[1em]
\includegraphics[scale=\myscale]{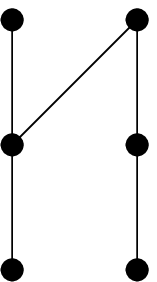} &
\includegraphics[scale=\myscale]{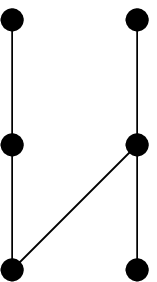} \\
$P(D)$ & $P(D')$\\[0em]
\multicolumn{2}{c}{\includegraphics[scale=\myscale]{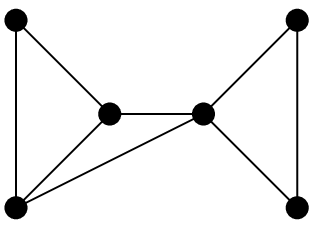}} \\
\multicolumn{2}{c}{$G$}
\end{tabular}
}
\caption{The web diagrams, their deconstruction posets, and web graph for Example~\ref{posethasseexample}.\label{Deed}}
\end{figure}
\end{example}

Our second result addresses general entries of the web-colouring matrix and
upside-down web diagrams.  Given a web diagram $D$, let $\flip(D)$ be
the web diagram achieved by turning it upside-down. This operation is,
by definition, an involution.

\begin{definition}
Let $D=\left\{e_i=(a_i,b_i,c_i,d_i)~:~ 1\leq i \leq m\right\}$ be a
web diagram in a web world $W$.  The flip of $D$, $\flip(D)$ is the
web diagram in $W$ with edges:
$$\flip(D) =\left\{(a_i,b_i,p_{a_i}+1-c_i,p_{b_i}+1-d_i)~:~ 1\leq i\leq m \right\}.$$
\end{definition}

\begin{example}
Let $D$ be the web diagram in Example~\ref{posethasseexample} and
illustrated in Figure~\ref{Deed}.
Then $$\flip(D)=\{(1,2,3,1),(1,3,2,2),(1,4,3,1),(3,5,1,1),(5,6,2,1),(5,7,3,1)\}.$$
\centerline{ \def\myscale{0.6} \def\myother{8ex}
\begin{tabular}{c}
\includegraphics[scale=\myscale]{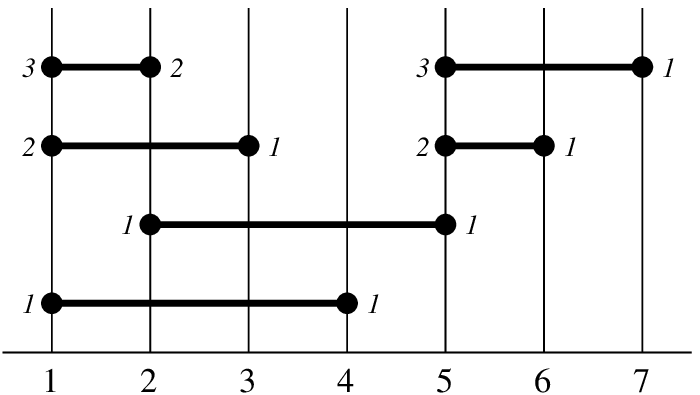} \\
$\flip(D)$
\end{tabular}
}
\end{example}

\begin{theorem}\label{riccar}
Let $W$ be a web world and $D, D' \in W$.
Then $\MMmatrix{(W)}{(x)}{D,D'} = \MMmatrix{(W)}{(x)}{\flip(D),\flip(D')}$.
\end{theorem}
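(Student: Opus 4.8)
The plan is to exhibit an explicit colour-preserving bijection between the $\ell$-colourings $\alpha$ of $D$ with $\Recon{D}{\alpha}=D'$ and the $\ell$-colourings $\beta$ of $\flip(D)$ with $\Recon{\flip(D)}{\beta}=\flip(D')$. Since $\flip$ merely relabels the heights of vertices on each peg via $c \mapsto \peg_i(D)+1-c$ — a bijection on the vertices of peg $i$ that does not touch the edge set as an abstract set of pairs $\{a_i,b_i\}$ — an edge $e_i$ of $D$ corresponds canonically to an edge $\flip(e_i)$ of $\flip(D)$. The natural candidate map is therefore $\beta(\flip(e_i)) := \psi(\alpha(e_i))$ for a suitable order-reversing relabelling $\psi$ of the colour set $[1,\ell]$, namely $\psi(j)=\ell+1-j$. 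The content to be verified is that this $\psi$ is forced: reconstruction stacks the colour classes $D_\alpha(1),\ldots,D_\alpha(\ell)$ in \emph{increasing} order of colour, and flipping the whole diagram upside-down reverses that stacking order, so to recover $\flip(D')$ from $\flip(D)$ one must read the colour classes in the opposite order, which is exactly the effect of conjugating by $\psi$.

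The key steps, in order, would be: (1) record the elementary fact that $\flip$ is an involution on $W$ and that it induces a canonical edge-bijection $e_i \leftrightarrow \flip(e_i)$ commuting with the peg-incidence data; (2) analyse how $\rel$ interacts with $\flip$ — show that for any sub-web $S$ of $D$ one has $\rel(\flip(S)) = \flip(\rel(S))$ in the appropriate sense, i.e. relabelling the heights of a colour class and then flipping agrees with flipping first and then relabelling, because both operations act on each peg by reindexing its occupied heights monotonically; (3) show that flipping a sum reverses it, $\flip(A \oplus B) = \flip(B) \oplus \flip(A)$, which is immediate from the definition of $\oplus$ as "place $B$ on top of $A$"; (4) combine (2) and (3): if $\Recon{D}{\alpha} = \rel(D_\alpha(1)) \oplus \cdots \oplus \rel(D_\alpha(\ell))$, then applying $\flip$ gives $\flip(\rel(D_\alpha(\ell))) \oplus \cdots \oplus \flip(\rel(D_\alpha(1)))$, which is precisely $\Recon{\flip(D)}{\beta}$ for $\beta = \psi \circ \alpha$ with $\psi(j)=\ell+1-j$, since $(\flip(D))_\beta(j) = \flip(D_\alpha(\ell+1-j))$; (5) conclude that $\Recon{D}{\alpha}=D'$ if and only if $\Recon{\flip(D)}{\beta}=\flip(D')$, that $\alpha \mapsto \beta$ is a bijection on $\ell$-colourings for each $\ell$ (its inverse being the same construction applied to $\flip(D), \flip(D')$, using that $\flip$ is an involution), and therefore $f(D,D',\ell) = f(\flip(D),\flip(D'),\ell)$ for all $\ell$, which gives equality of the web-colouring matrix entries term by term in $x^\ell$.

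The main obstacle I expect is step (2): making precise and correct the claim that $\rel$ and $\flip$ commute. The subtlety is that $D_\alpha(j)$, the colour-$j$ class sitting inside $D$, is \emph{not} a web diagram — its vertex heights are inherited from $D$ and generally have gaps — and $\flip$ was defined only on genuine web diagrams via the quantities $\peg_i(D)$. One must either extend $\flip$ to such partial diagrams (flipping within the ambient peg, i.e. using $\peg_i(D)$ not $\peg_i$ of the class) or, more cleanly, argue at the level of the cyclic/linear order of heights on each peg: on peg $i$, the colour classes partition the $\peg_i(D)$ vertices into blocks, $\flip$ reverses the top-to-bottom order of all vertices on the peg, and $\rel$ within a class only depends on the induced order of that class's vertices — reversing the global order reverses each induced order consistently, so the two operations commute. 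Getting this bookkeeping exactly right on every peg simultaneously, and checking it is compatible with the "increasing order of colours" convention in $\Recon{\cdot}{\cdot}$, is where the real work lies; once it is in place, steps (3)–(5) are formal.
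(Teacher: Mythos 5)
Your proposal is correct and takes essentially the same route as the paper's own proof: the colour-reversal $\beta(e)=k+1-\alpha(e)$, the identity $\flip(B_1\oplus\cdots\oplus B_m)=\flip(B_m)\oplus\cdots\oplus\flip(B_1)$, and the commutation of $\rel$ with $\flip$ on colour classes are exactly the ingredients used there. The subtlety you flag in step (2) is in fact passed over silently in the paper's chain of equalities, so identifying and addressing it only strengthens the argument.
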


\begin{proof}
Let $W$ be a web world and $D, D'$ diagrams in $W$.  To prove the
theorem it suffices to show that for every $k$-colouring $\alpha$ of
$D$ that yields $D'$, there is a unique $k$-colouring $\beta$ of
$\flip(D)$ that yields $\flip(D')$.  Define the colouring $\beta$ as
follows: if $e=(a,b,x,y) \in D$ with $\alpha (e)=\ell$, then let
$\beta (e)=k+1-\ell$.

We will now show that $\Recon{\flip(D)}{\beta } = \flip(D')$ which
implies the result.  Notice that in general if $B_1,\ldots , B_m$ are
web diagrams then
\begin{align} \label{flipidentity}
\flip(B_1\oplus \cdots \oplus B_m) = \flip(B_m) \oplus \cdots \oplus \flip(B_1).
\end{align}
We have:
\begin{align*}
\Recon{\flip(D)}{\beta } &= \rel(\flip(D)_{\beta }(1)) \oplus \cdots \oplus \rel(\flip(D)_{\beta }(k)) \\
&= \flip(\rel(D_{\beta }(1))) \oplus \cdots \oplus \flip(\rel(D_{\beta }(k))).
\end{align*}
Using Equation~\ref{flipidentity}, and applying $\flip$ to both sides of the previous equation, we have
\begin{align*}
\flip(\Recon{\flip(D)}{\beta })&= \flip(\flip(\rel(D_{\beta }(1))) \oplus \cdots \oplus \flip(\rel(D_{\beta }(k))))\\
&= \flip(\flip(\rel(D_{\beta }(k)))) \oplus \cdots \oplus \flip(\flip(\rel(D_{\beta}(1))))\\
&=\rel(D_{\beta}(k)) \oplus \cdots \oplus\rel(D_{\beta}(1)).
\end{align*}
Since those edges of $D$ that are coloured $\ell$ using the colouring
$\beta$ are precisely the same as the edges that are coloured
$k+1-\ell$ using the colouring $\alpha$, we have $D_{\beta}(\ell) =
D_{\alpha}(k+1-\ell)$ and
\begin{align*}
\flip(\Recon{\flip(D)}{\beta})&= \rel(D_{\alpha}(1)) \oplus \cdots \oplus \rel(D_{\alpha}(k))\\
&= D'.
\end{align*}
Applying $\flip$ to both sides gives $\Recon{\flip(D)}{\beta} = \flip(D')$, as was required.
\end{proof}


\section{The squares of web-colouring and web-mixing matrices}\label{secsix}
In this section we show how to calculate the entries for the square of
a web-colouring matrix and give the first mathematical proof that web-mixing
matrices are idempotent. This property has been established using
physical arguments by Gardi and White~\cite[\S 3]{GPO}, and plays an
important role in QCD calculations. Idempotence implies, for example,
that all web-mixing matrices have eigenvalues zero or one (with some
multiplicity). Only combinations of web diagrams in a given web world
associated with unit eigenvalues of the mixing matrix survive in the
exponent of the scattering amplitude. 
\begin{theorem}\label{thm41}
Let $W$ be a web world whose diagrams each have $n$ edges.  Let $D,D'
\in W$ and suppose that $\MMmatrix{(W)}{(x)}{D,D'} =\bbbeta_1 x+\cdots
+\bbbeta_n x^n$.  Then
\begin{align*}
\left( \MMmatrix{(W)}{(x)}{} \right)^2_{D,D'}
	& = \sum_{i=1}^n  \bbbeta_i L_i(x)
\end{align*}
where $\ds
L_i(x) = \sum_{j,k\geq 1} x^{j+k} 
	\sum_{b=0}^j
	\sum_{a=0}^k
	(-1)^{j+k-(b+a)} \binom{j}{b} \binom{k}{a} \binom{ab}{i}$. 
(See Figure~\ref{litable}.)
\end{theorem}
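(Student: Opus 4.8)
The plan is to compute the $(D,D')$ entry of the square $\bigl(\MMmatrix{(W)}{(x)}{}\bigr)^2$ by expanding matrix multiplication and reinterpreting the result as a counting problem about pairs of colourings. Writing $\bigl(\MMmatrix{(W)}{(x)}{}\bigr)^2_{D,D'} = \sum_{D'' \in W} \MMmatrix{(W)}{(x)}{D,D''}\,\MMmatrix{(W)}{(x)}{D'',D'}$, and recalling that the coefficient of $x^k$ in $\MMmatrix{(W)}{(x)}{D,D''}$ counts $k$-colourings $\alpha$ of $D$ with $\Recon{D}{\alpha}=D''$, the coefficient of $x^{j+k}$ in the $(D,D')$ entry of the square counts pairs $(\alpha,\beta)$ where $\alpha$ is a $j$-colouring of $D$, $\beta$ is a $k$-colouring of the intermediate diagram $D''=\Recon{D}{\alpha}$, and $\Recon{D''}{\beta}=D'$. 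So the first step is to set up this ``pair of colourings'' bijective framework and to check that summing over $D''$ simply removes the intermediate constraint, leaving us to count composite colouring data directly on $D$.

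The key observation, and the second step, is that a $j$-colouring $\alpha$ followed by a $k$-colouring $\beta$ of $\Recon{D}{\alpha}$ can be encoded as a single map from the edge set of $D$ into the product $[1,j]\times[1,k]$: an edge $e$ is sent to $(\alpha(e),\gamma(e))$, where $\gamma$ is the colour that $\beta$ assigns to the block of $\Recon{D}{\alpha}$ containing the (relabelled) copy of $e$. Two such composite maps produce the same pair $(D'',D')$-history precisely when the induced ordered partition data agree, and the surjectivity requirements on $\alpha$ and on $\beta$ translate into: every one of the $j$ ``rows'' is used and every one of the $k$ ``columns'' is used. Thus the number of composite colourings giving a fixed $\MMmatrix{}{}{}$-relevant pattern is governed by surjections onto subsets of the grid $[1,j]\times[1,k]$, and the essential combinatorial content is an inclusion–exclusion over which rows and columns are actually hit.

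The third step is the inclusion–exclusion itself. Fixing $\MMmatrix{(W)}{(x)}{D,D'} = \bbbeta_1 x + \cdots + \bbbeta_n x^n$, I expect the count of relevant composite colourings that ultimately realise $D'$ from $D$ with exactly $i$ ``effective'' colours (i.e.\ the number of edges, or rather the number of distinct images in the reduced colouring, is controlled by $\bbbeta_i$) to work out as follows: choose a sub-grid of $b$ rows out of $j$ and $a$ columns out of $k$ that are genuinely used, with sign $(-1)^{j+k-(b+a)}\binom{j}{b}\binom{k}{a}$ from inclusion–exclusion on the unused rows and columns, and then the $ab$ cells of that sub-grid play the role of an intermediate set of colours into which the original $i$-colouring-data must inject, contributing $\binom{ab}{i}$. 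Summing $\bbbeta_i$ against these weights over $b\le j$, $a\le k$ and over $j,k\ge 1$ with the factor $x^{j+k}$ yields exactly $\sum_{i=1}^n \bbbeta_i L_i(x)$ with $L_i(x)$ as stated. It will be worth double-checking the direction of the two inclusion–exclusions (one for $\alpha$'s surjectivity, one for $\beta$'s) and confirming that they decouple cleanly into the product $\binom{j}{b}\binom{k}{a}$.

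The main obstacle, I expect, is the precise justification that a pair (colouring $\alpha$ of $D$, colouring $\beta$ of $\Recon{D}{\alpha}$) with $\Recon{\Recon{D}{\alpha}}{\beta}=D'$ is equinumerous with the grid-based data counted by $\binom{ab}{i}$ — in other words, showing that the relabelling operation $\rel$ and the stacking $\oplus$ interact with a two-stage colouring exactly as if one were refining an ordered set partition twice, so that only the combinatorics of the colour values (and not the internal structure of the $E_i$'s) matters. This is morally the same phenomenon exploited in the proof of Theorem~\ref{mixingthm}, where colourings of an indecomposable block are forced to be constant; here the analogous fact is that $\Recon{D''}{\beta}=D'$ forces $\beta$ to be constant on each indecomposable block of $D''$, and one must track how those blocks sit inside $D$ under $\alpha$. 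Once that structural lemma is in place, the remainder is the bookkeeping inclusion–exclusion sketched above, and matching it termwise against the closed form for $L_i(x)$.
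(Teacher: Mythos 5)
Your proposal follows essentially the same route as the paper's proof: expand the square over intermediate diagrams, reinterpret the coefficient of $x^{j+k}$ as counting pairs $(\alpha,\beta)$ consisting of a $j$-colouring of $D$ and a $k$-colouring of $\Recon{D}{\alpha}$, encode such a pair as the map $e\mapsto(\beta(e),\alpha(e))$ into a $j\times k$ grid, and observe that the pairs composing to a fixed $i$-colouring $\gamma$ are in bijection with $j\times k$ zero-one matrices having exactly $i$ ones and no all-zero row or column. The only difference is that the paper quotes the closed form for that count from the literature rather than deriving it, whereas you run the double inclusion--exclusion over the hit rows and columns yourself; your expression $\sum_{a,b}(-1)^{j+k-(a+b)}\binom{j}{b}\binom{k}{a}\binom{ab}{i}$ is the standard derivation and matches the paper's $\OtherNumb{i}{j,k}$.

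One correction to your final paragraph: the structural lemma you anticipate needing --- that $\Recon{D''}{\beta}=D'$ forces $\beta$ to be constant on each indecomposable block of $D''$ --- is false for general $D'$ and is not what the argument requires. A non-constant colouring of an indecomposable diagram is perfectly legal; it simply reconstructs to a different diagram (in Example~\ref{twopegsexamp}, colouring the two edges of the indecomposable $D_1$ with distinct colours yields $D_2$). Constancy on indecomposable parts is relevant only to diagonal entries, as in Section~\ref{secseven}. What the proof actually needs --- and what you already state correctly in your second step --- is that the two-stage reconstruction $\Recon{\Recon{D}{\alpha}}{\beta}$ coincides with $\Recon{D}{\gamma}$, where $\gamma$ ranks the nonempty fibres of $e\mapsto(\beta(e),\alpha(e))$ in lexicographic order; in the paper this is the displayed identity $D'=\rel(D_{\beta,\alpha}(1,1))\oplus\cdots\oplus\rel(D_{\beta,\alpha}(j,k))$. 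With that substitution your sketch closes up into the paper's proof.
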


Figure~\ref{litable} presents the expressions for $L_n(x)$ for all $n
\in [1,7]$.  The previous theorem may be summarised by saying that the
square of the web-colouring matrix $\MMmatrix{(W)}{(x)}{}^2$ is the
image of $\MMmatrix{(W)}{(x)}{}$ under the linear operator
$T:\Complex{[[x]]} \to \Complex{[[x]]}$ which maps the basis
$(x^i)_{i\geq 0}$ for $\Complex{[[x]]}$ to $(L_i(x))_{i\geq 0}$.

\begin{figure}
$$\begin{array}{|c|c|} \hline
n & L_n(x) \\ \hline\hline
0 & 1 \\
1 & x^2\\
2 & 2x^3+2x^4\\
3 & 6x^4+12x^5+6x^6\\
4 & x^4 + 26x^5 + 73x^6 + 72x^7 + 24x^8\\
5 & 12x^5 + 156x^6 + 516x^7 + 732x^8 + 480x^9 + 120x^{10}\\
6 & 2x^5+  126x^6 + 1206x^7 + 4322x^8 + 7680x^9 + 7320x^{10} + 3600x^{11} + 720x^{12}\\ \hline
\end{array}$$
\caption{The first few power series $L_n(x)$.\label{litable}}
\end{figure}

\begin{proof}
Let $\alpha  \in \Colourings(W)$.
Given two colourings $\alpha, \alpha ' \in \Colourings(W)$ there is a unique
$\gamma \in \Colourings(W)$ such that
$\Recon{D}{\gamma}=\Recon{\Recon{D}{\alpha}}{\beta}$.  In such an
instance we write $\gamma \equiv \beta \circ \alpha$.  Recall that the
$(D,D')$ entry of the web-colouring matrix of $W$ is:
\begin{align*}
\MMmatrix{(W)}{(x)}{D,D'}
	&= \sum_{\substack{\alpha \in \Colourings(W) \\ \Recon{D}{\alpha}=D'}} x^{|\alpha|} 
\end{align*}
and suppose that this is 
$\MMmatrix{(W)}{(x)}{D,D'}=\bbbeta_1 x+\cdots+\bbbeta_n x^n$.  Then
\begin{align*}
\left(
\MMmatrix{(W)}{(x)}{}
\right)^2_{D,D'}
&= 
\sum_{D'' \in W} \MMmatrix{(W)}{(x)}{D,D''} \MMmatrix{(W)}{(x)}{D'',D'} \\
&=
\sum_{D'' \in W}
\sum_{\substack{\alpha \in \Colourings(W) \\ \Recon{D}{\alpha}=D''}} x^{|\alpha|}
\sum_{\substack{\beta  \in \Colourings(W) \\ \Recon{D''}{\beta}=D'}} x^{|\beta|}
\\
&=
\sum_{j,k=1}^n \sum_{\substack{\alpha \in \Colourings_j(W)\\ \beta \in \Colourings_k(W)\\ \Recon{\Recon{D}{\alpha}}{\beta}=D' }} x^{j+k}\\
&=
\sum_{j,k=1}^n
\sum_{i=1}^n
 \sum_{\substack{\gamma \in \Colourings_i(W) \\ \Recon{D}{\gamma}=D'}}
  \sum_{\substack{\alpha\in \Colourings_j(W)\\ \beta \in \Colourings_k(W) \\ \gamma \equiv \beta \circ \alpha}} x^{j+k}\\
&=
\sum_{i=1}^n
 \sum_{\substack{\gamma \in \Colourings_i(W) \\ \Recon{D}{\gamma}=D'}}
\sum_{j,k=1}^n
	\OtherNumb{i}{j,k}
	x^{j+k}
=
\sum_{i=1}^n  \bbbeta_i \sum_{j,k=1}^n  \OtherNumb{i}{j,k} x^{j+k},
\end{align*}
where $\OtherNumb{i}{j,k}$ is the number of pairs $(\beta,\alpha)$ of
colourings in $\Colourings_j(W) \times \Colourings_k(W)$ such that
$\beta \circ \alpha$ is equivalent to a $i$-colouring $\gamma \in
\Colourings_i(W)$.
To see how a colouring of a colouring of a diagram can be turned into
one colouring~\footnote{The argument used here is reminiscent of the double application of the statistical physics-based {\it replica trick} in ref.~\cite{GPO}.}, we consider the ordered pairs of colours.  The diagram
$D$ is coloured with $\alpha$ and reconstructed to give the diagram
$D''$.  The diagram $D''$ is coloured with $\beta$ and reconstructed
to give the diagram $D'$.  

Let us consider what happens to an edge
$e_i=(a_i,b_i,x_i,y_i) \in D$ during this process.  In the first
colouring, $\alpha (e_i)=v$, say, and this edge in $D''$ will be
represented as $e_i'=(a_i,b_i,x_i',y_i')$ where $x_i'$ and $y_i'$ are
the new heights of the endpoints of that edge in $D''$.  The edge
$e_i'$ is now coloured $\beta (e_i')=u$, say, and this edge will be
represented as $e_i''=(a_i,b_i,x_i'',y_i'')$ in $D'$ where $x_i''$ and
$y_i''$ are the heights of the endpoints of $e_i''$ in $D'$.  Although
the heights change from one diagram to the next, we can still index
the edge in question throughout as the $i$th edge $e_i$ and write
$\alpha(e_i)=v$ and $\beta(e_i)=u$.

Let $D \in W$ and $(\alpha,\beta) \in \Colourings_j(W)\times
\Colourings_k(W)$.  Let us define $$D_{\beta,\alpha}(u,v) := \{e \in D
~:~ \beta(e)=u \mbox{ and } \alpha(e)=v\}.$$ Then
\begin{align*}
D' &= \rel(D_{\beta,\alpha }(1,1)) \oplus \cdots \oplus \rel(D_{\beta,\alpha}(1,k)) \oplus \cdots \oplus \rel(D_{\beta,\alpha}(j,1)) \oplus \cdots \oplus \rel(D_{\beta,\alpha}(j,k)).
\end{align*}
In other words, the subweb-diagram of $\Recon{D}{\alpha}$ whose edges
$e$ have colour $\beta(e)=1$ will maintain their order relative to one
another in $D'$ and be the lowest.  But amongst these edges, which
will be the lowest w.r.t. the colouring $\alpha$ of $D$?  It will be
those edges $e \in D$ that are coloured $\alpha(e)=1$, followed by all
those edges $e \in D$ with $\alpha(e)=2$, and so on.  Thus the
lexicographic order on all such pairs of colourings that exists
corresponds to the total order for a single colouring.

For example, if we have the pairs of colourings
$(\beta(e),\alpha(e))=(1,3)$, $(1,5)$, $(2,1)$, $(2,2)$, $(2,5)$, $(3,4)$, then these
correspond to the colours $\gamma(e)=1,2,3,4,5,6$, respectively.

The number $\OtherNumb{i}{j,k}$ is therefore the number of $j\times k$
zero-one matrices that have exactly $i$ ones and there are neither
columns nor rows of only zeros.  This number is known to be (see
\cite{peter,peter_GM} or ~\cite[Eqn. 120]{Maia}):
\begin{align*}
\OtherNumb{r}{m,n} &= \sum_{\ell \geq r} \sum_{d|\ell} (-1)^{n+m-(d+\ell/d)} 
		\binom{m}{d}\binom{n}{\ell/d} \binom{\ell}{r}\\
&= 
\sum_{b=0}^m  
\sum_{a=0}^n 
(-1)^{n+m-(b+a)} \binom{m}{b} \binom{n}{a} \binom{ab}{r}
\end{align*}
Therefore
$
\left(
\MMmatrix{(W)}{(x)}{}
\right)^2_{D,D'}
= \ds\sum_{i=1}^n  \bbbeta_i L_i(x)
$
where 
\begin{align*}
\ds
L_i(x) = \sum_{j,k\geq 1} x^{j+k} 
	\sum_{\substack{a\in[0,k]\\ b\in [0,j]}}
	(-1)^{j+k-(b+a)} \binom{j}{b} \binom{k}{a} \binom{ab}{i}.
\end{align*}
\end{proof}

As mentioned above, the following theorem has already been proven
using a physics argument (see ~\cite[Sect. 3]{GPO}).  Here we give the first
combinatorial proof.

\begin{theorem} 
Let $W$ be a web world whose web diagrams each have $n$ edges. The
web-mixing matrix $\RRmatrix{(W)}{}$ is idempotent; $(\RRmatrix{(W)}{}
) ^2=\RRmatrix{(W)}{} $.
\end{theorem}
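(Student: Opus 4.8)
The plan is to use Equation~\ref{rtom} together with the analysis behind Theorem~\ref{thm41} to reduce the idempotency $\bigl(\RRmatrix{(W)}{}\bigr)^2=\RRmatrix{(W)}{}$, entrywise, to a single numerical identity about zero-one matrices, and then to establish that identity by a short generating-function computation that produces a logarithm. Concretely, fix $D,D'\in W$ and write $\MMmatrix{(W)}{(x)}{D,D'}=\bbbeta_1x+\cdots+\bbbeta_nx^n$ (a polynomial of degree at most $n$ since every diagram of $W$ has $n$ edges), so that by Equation~\ref{rtom}, $\RRmatrix{(W)}{D,D'}=\sum_{i=1}^n\tfrac{(-1)^{i-1}}{i}\bbbeta_i$. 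Expanding $\bigl(\RRmatrix{(W)}{}\bigr)^2_{D,D'}=\sum_{D''\in W}\RRmatrix{(W)}{D,D''}\RRmatrix{(W)}{D'',D'}$, replacing the two factors by $\sum_{j\ge1}\tfrac{(-1)^{j-1}}{j}f(D,D'',j)$ and $\sum_{k\ge1}\tfrac{(-1)^{k-1}}{k}f(D'',D',k)$, and interchanging the (finite) summation over $D''$ with those over $j$ and $k$, the inner quantity $\sum_{D''}f(D,D'',j)f(D'',D',k)$ becomes the number of pairs $(\alpha,\beta)$ with $\alpha$ a $j$-colouring of $D$ and $\beta$ a $k$-colouring of $\Recon{D}{\alpha}$ satisfying $\Recon{\Recon{D}{\alpha}}{\beta}=D'$. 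Partitioning these pairs according to the unique single colouring $\gamma$ with $\gamma\equiv\beta\circ\alpha$ and invoking the bijection at the heart of the proof of Theorem~\ref{thm41} --- between the pairs mapping to a fixed $i$-colouring $\gamma$ and the $j\times k$ zero-one matrices with $i$ ones and no zero row or column --- shows this count equals $\sum_{i=1}^n\bbbeta_i\OtherNumb{i}{j,k}$. Collecting terms, $\bigl(\RRmatrix{(W)}{}\bigr)^2_{D,D'}=\sum_{i=1}^n\bbbeta_i\bigl(\sum_{j,k\ge1}\tfrac{(-1)^{j+k}}{jk}\OtherNumb{i}{j,k}\bigr)$, so the theorem reduces to showing $\sum_{j,k\ge1}\tfrac{(-1)^{j+k}}{jk}\OtherNumb{i}{j,k}=\tfrac{(-1)^{i-1}}{i}$ for every $i\ge1$: then the right side collapses to $\sum_{i=1}^n\tfrac{(-1)^{i-1}}{i}\bbbeta_i=\RRmatrix{(W)}{D,D'}$.

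To prove this identity I would introduce the generating function $F(y)=\sum_{i\ge1}\bigl(\sum_{j,k\ge1}\tfrac{(-1)^{j+k}}{jk}\OtherNumb{i}{j,k}\bigr)y^i$, which is a legitimate formal power series since $\OtherNumb{i}{j,k}=0$ whenever $j>i$ or $k>i$, making each coefficient a finite sum. Interchanging the order of summation gives $F(y)=\sum_{j,k\ge1}\tfrac{(-1)^{j+k}}{jk}g_{j,k}(y)$ with $g_{j,k}(y)=\sum_i\OtherNumb{i}{j,k}y^i$; summing the closed form for $\OtherNumb{i}{j,k}$ recorded in the proof of Theorem~\ref{thm41} against $y^i$ gives $g_{j,k}(y)=\sum_{b=0}^{j}\sum_{a=0}^{k}(-1)^{j+k-b-a}\binom{j}{b}\binom{k}{a}(1+y)^{ba}$. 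After substitution the signs reduce to $(-1)^{b+a}$; the sum over $a$ then yields $(1-(1+y)^b)^k$, and since $1-(1+y)^b$ has zero constant term for $b\ge1$, the sum over $k$ yields $\sum_{k\ge1}\tfrac1k(1-(1+y)^b)^k=-\log\bigl((1+y)^b\bigr)=-b\log(1+y)$ while the $b=0$ term vanishes. This leaves $F(y)=-\log(1+y)\sum_{j\ge1}\tfrac1j\sum_{b=1}^{j}(-1)^b b\binom{j}{b}$, and since $b\binom{j}{b}=j\binom{j-1}{b-1}$ and $\sum_{c\ge0}(-1)^c\binom{j-1}{c}=0$ for $j\ge2$, the inner sum equals $-1$ when $j=1$ and $0$ otherwise; so the $j$-sum collapses to its $j=1$ term and $F(y)=\log(1+y)$. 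Reading off the coefficient of $y^i$ gives $\tfrac{(-1)^{i-1}}{i}$, as required.

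The main obstacle is bookkeeping rather than any single calculation, and two points warrant care. First, the sum-interchanges used to define and simplify $F(y)$ must be justified, and this rests entirely on the vanishing $\OtherNumb{i}{j,k}=0$ for $j>i$ or $k>i$: the naive rearrangement that decouples the $j$- and $k$-sums and runs them to infinity diverges term by term, so one must keep $b$ attached to the $j$-sum and $a$ to the $k$-sum throughout, and only then pass $k\to\infty$ --- which is legitimate because $(1-(1+y)^b)^k$ has $y$-valuation $k$ for $b\ge1$, so each coefficient of $F$ still receives finitely many contributions. Second, one must confirm that the $j\times k$-matrix count drawn from the proof of Theorem~\ref{thm41} is valid for each individual pair $(j,k)$ --- namely that $\sum_{D''}f(D,D'',j)f(D'',D',k)=\sum_i\bbbeta_i\OtherNumb{i}{j,k}$ --- and not merely after the colour-counts $x^j$ and $x^k$ have been amalgamated into $x^{j+k}$; this holds by the partitioning over $\gamma$ described above, but it is worth spelling out. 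With those two points secured, all that remains is the logarithm computation.
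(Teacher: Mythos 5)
Your argument is correct, and its first stage coincides exactly with the paper's: both expand $\bigl(\RRmatrix{(W)}{}\bigr)^2_{D,D'}$ over intermediate diagrams, re-sort the pairs of colourings $(\alpha,\beta)$ by the composite colouring $\gamma\equiv\beta\circ\alpha$, invoke the zero-one-matrix count $\OtherNumb{i}{j,k}$ from the proof of Theorem~\ref{thm41}, and thereby reduce idempotency to the single identity $\sum_{j,k\ge 1}\frac{(-1)^{j+k}}{jk}\,\OtherNumb{i}{j,k}=\frac{(-1)^{i-1}}{i}$. Where you genuinely depart from the paper is in the proof of this identity. The paper keeps all sums truncated at $n$, absorbs the factors $1/jk$ into shifted binomial coefficients via $\frac{1}{j}\binom{j}{b}=\frac{1}{b}\binom{j-1}{b-1}$, applies a hockey-stick summation to reach $\sum_{a,b=1}^{n}(-1)^{a+b}\binom{ab-1}{i-1}\binom{n}{a}\binom{n}{b}=(-1)^{i-1}$, and proves that by expanding $\binom{ab-1}{i-1}$ in Stirling numbers of the first kind and using the vanishing of $\sum_b(-1)^b\binom{n}{b}b^{j-1}$ for $1\le j-1<n$. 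Your route instead packages the identity into the generating function $F(y)$, where the closed form for $\OtherNumb{i}{j,k}$ telescopes through $(1-(1+y)^b)^k$ and $-b\log(1+y)$ down to $F(y)=\log(1+y)$; this avoids Stirling numbers altogether and makes it transparent why the answer is $\frac{(-1)^{i-1}}{i}$, namely because it is the $i$th coefficient of $\log(1+y)$. The cost is the summation-order bookkeeping you rightly flag: the rearrangement is only legitimate coefficientwise, resting on $\OtherNumb{i}{j,k}=0$ for $j>i$ or $k>i$ and on $(1-(1+y)^b)^k$ having $y$-valuation $k$ for $b\ge 1$, whereas the paper sidesteps this by working with finite sums throughout. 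Both proofs are complete; yours is arguably the more conceptual, the paper's the more elementary.
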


\begin{proof}
\newcommand{\TT}{\mathcal{T}}
Using the same method of summation as we did at the start of the proof
of Theorem~\ref{thm41},
\begin{align*}
(\RRmatrix{(W)}{})^2_{D,D'} 
&= \sum_{D'' \in W} \RRmatrix{(W)}{D,D''} \RRmatrix{(W)}{D'',D'}  \\
&= \sum_{i,j =1}^n \sum_{\substack{\alpha \in \Colourings_i(W) \\ \beta \in \Colourings_j(W)\\ \Recon{\Recon{D}{\alpha}}{\beta}=D'}} \dfrac{(-1)^{i-1}}{i} \dfrac{(-1)^{j-1}}{j} \\
&= \sum_{i,j =1}^n \dfrac{(-1)^{i-1}}{i} \dfrac{(-1)^{j-1}}{j}  
	\sum_{\substack{\alpha \in \Colourings_i(W) \\ \beta \in \Colourings_j(W)\\ \Recon{\Recon{D}{\alpha}}{\beta}=D'}} 1.
\end{align*}
Since 
\begin{align*}
\sum_{\substack{\alpha \in \Colourings_i(W) \\ \beta \in \Colourings_j(W)\\ \Recon{\Recon{D}{\alpha}}{\beta}=D'}} 1
&= \sum_{k\geq 1} \sum_{\substack{\gamma \in \Colourings_k(W)\\ \gamma(D)=D'}} 	
	\sum_{\substack{\alpha \in \Colourings_i(W) \\ \beta \in \Colourings_j(W)\\ \beta\circ \alpha \equiv \gamma}} 1
= \sum_{k\geq 1} \sum_{\substack{\gamma \in \Colourings_k(W)\\ \gamma(D)=D'}} \OtherNumb{k}{i,j},
\end{align*}
this gives
\begin{align}
(\RRmatrix{(W)}{})^2 _{D,D'}
&= \sum_{i,j =1}^n \dfrac{(-1)^{i-1}}{i} \dfrac{(-1)^{j-1}}{j}
	\sum_{k \geq 1} \sum_{\substack{\gamma \in \Colourings_k(W)\\ \gamma(D)=D'}} \OtherNumb{k}{i,j} \nonumber \\
&= \sum_{k=1}^{n} \sum_{\substack{\gamma \in \Colourings_k(W)\\ \gamma(D)=D'}} (-1)^{-2} \sum_{i,j=1}^{n} \dfrac{(-1)^{i+j}}{ij} \OtherNumb{k}{i,j}. \label{chess}
\end{align}
The inner sum is 
\begin{align}
\sum_{i,j=1}^{n} \dfrac{(-1)^{i+j}}{ij} \OtherNumb{k}{i,j}
&= \sum_{i,j=1}^{n} \dfrac{(-1)^{i+j}}{ij} \sum_{a=0}^i \sum_{b=0}^j (-1)^{i+j-(a+b)} \binom{i}{a} \binom{j}{b} \binom{ab}{k} \nonumber \\
&= \sum_{i,j=1}^{n} \sum_{a=0}^i \sum_{b=0}^j (-1)^{2(i+j)-(a+b)}\dfrac{1}{i} \binom{i}{a} \dfrac{1}{j} \binom{j}{b} \binom{ab}{k}\nonumber \\
&= \sum_{i,j=1}^{n} \sum_{a=0}^i \sum_{b=0}^j (-1)^{a+b}\dfrac{1}{k} \binom{i-1}{a-1} \binom{j-1}{b-1} \binom{ab-1}{k-1} \nonumber\\
&= \dfrac{1}{k} \sum_{a,b=1}^n (-1)^{a+b} \binom{ab-1}{k-1} \sum_{i=a}^n \sum_{j=b}^{n} \binom{i-1}{a-1} \binom{j-1}{b-1} \nonumber \\
&= \dfrac{1}{k} \sum_{a,b=1}^n (-1)^{a+b} \binom{ab-1}{k-1} \binom{n}{a} \binom{n}{b}.\label{rteqn}
\end{align}
We now need to prove that for $1\leq k \leq n$,
$$F(n,k)~ :=~ \sum_{a,b=1}^n (-1)^{a+b} \binom{ab-1}{k-1} \binom{n}{a} \binom{n}{b}=(-1)^{k-1}.$$
Let $G(n,k,a)=\sum_{b=1}^{n} (-1)^b \binom{n}{b} \binom{ab-1}{k-1}$ so that $F(n,k)=\sum_{a=1}^{n} (-1)^a \binom{n}{a} G(n,k,a)$.
Now
\begin{align*}
\binom{ab -1}{k-1} &= \dfrac{(ab)(ab-1)\cdots (ab-k+1)}{ab(k-1)!} \\
&=\dfrac{1}{(k-1)!} \sum_{j=1}^k \stirlingfirst{k}{j} (-1)^{k-j} (ab)^{j-1},
\end{align*}
the second equality comes from Graham et
al.~\cite[eqn. (6.13)]{knuth_concrete} and $\stirlingfirst{k}{j}$ are
the Stirling numbers of the first kind.  Replacing this into the
expression 
\begin{align*}
G(n,k,a) 
	&= \sum_{b=1}^{n} (-1)^b \binom{n}{b} \dfrac{1}{(k-1)!} \sum_{j=1}^k \stirlingfirst{k}{j} (-1)^{k-j} (ab)^{j-1}\\
	&= 
	\dfrac{1}{(k-1)!} \sum_{j=1}^k \stirlingfirst{k}{j} (-1)^{k-j} a^{j-1} 
	\sum_{b=1}^{n} (-1)^b \binom{n}{b} b^{j-1}.
\end{align*}
Using Graham et al.~\cite[Eqn. (6.19)]{knuth_concrete}: $m! \stirlingsecond{n}{m} = \sum_k \binom{m}{k} k^n (-1)^{m-k}$ where $\stirlingsecond{n}{m}$ are the Stirling numbers of the second kind. The inner sum in the previous expression is
\begin{align*}
\sum_{b=1}^{n}(-1)^b \binom{n}{b} b^{j-1} = n! \stirlingsecond{j-1}{n}(-1)^n - 0^{j-1}.
\end{align*}
Since $k \in [1,n]$, and $j\leq k$, the Stirling number on the right
hand side of the previous equation will always be zero for the values
that we are summing over, and the only term that will contribute will
be $-0^{j-1}$. This term itself will always be zero, except for when
$j-1=0$. This gives
\begin{align*}
G(n,k,a) &= \dfrac{1}{(k-1)!} \sum_{j=1}^k \stirlingfirst{k}{j} (-1)^{k-j} a^{j-1} (-0^{j-1}) \\
&= \dfrac{1}{(k-1)!} \stirlingfirst{k}{1} (-1)^{k-1} a^0 (-0^0) \\
&= (-1)^k.
\end{align*}
Therefore $F(n,k)=\sum_{a=1}^{n} (-1)^a \binom{n}{a} (-1)^k = (-1)^k
((1-1)^n-1) = (-1)^{k-1}.$ This allows us to write
Equation~\ref{rteqn} as $(-1)^{k-1}/k$ which, in turn, means that
Equation~\ref{chess} is
\begin{align*}
(\RRmatrix{(W)}{})^2 _{D,D'}
&= \sum_{k=1}^{n} \sum_{\substack{\gamma \in \Colourings_k(W)\\ \gamma(D)=D'}} \dfrac{(-1)^{k-1}}{k} 
= \RRmatrix{(W)}{D,D'}.
\end{align*}
\end{proof}


\section{Indecomposable permutations and a web world on two pegs having multiple edges}\label{secseven}
In this section we consider the web world
$W_n$ whose web graph is $G(W_n)=K_2$ and the single edge is labelled with $n$.
This web world consists of diagrams having two pegs and $n$ edges connecting the
$n$ vertices on the first peg to $n$ vertices on the second peg. From
a physics point of view, such webs have been studied for a long
time~\cite{Gatheral:1983cz,Frenkel:1984pz,Sterman:1981jc}. However,
they have yet to be analysed using the language of web-colouring and
mixing matrices and which we will now attend to.

Every diagram $D \in W_n$ is uniquely described as a set of 4-tuples
$$D=\{(1,2,i,\pi_i) ~:~ \pi \in \Sym_n\}$$ and consequently there are
$n!$ diagrams in $W_n$.  We can abbreviate this by writing
$D=D_{\pi}$. Given a permutation $\pi =\pi_1\ldots \pi_n$ and
permutations $\alpha \in \Sym_{\ell}$, $\beta \in \Sym_{n-\ell}$, we
say that $\pi$ is the sum of the permutations $\alpha$ and $\beta$ if
$\pi_i=\alpha_i$ for all $i \leq \ell$ and $\pi_i = \ell +
\beta_{i-\ell}$ for all $i > \ell$. We write this as
$\pi=\alpha+\beta$.

If a permutation $\pi$ can be written as such a sum of smaller permutations, then we say that $\pi$ is decomposable. 
Otherwise we say that $\pi$ is indecomposable.
Let $\Inde_n$ be the set of indecomposable permutations of length $n$ and let $\Inde=\cup_{n \geq 1} \Inde_n$.
Every permutation $\pi \in \Sym_n$ admits a unique representation as a sum of indecomposable permutations.
If $\pi = \sigma_1 + \cdots +\sigma_k$ where every $\sigma_i \in \Inde$, then $D_{\pi}=D_{\sigma_1} \oplus \cdots \oplus D_{\sigma_k}$ and we will write $\indparts(\pi)$ for the number $k$ of indecomposable permutations in the sum.

\begin{problem}{[Word reconstruction generating function]}
\label{problemone}
\ \\
Given a finite word $w=w_1\ldots w_n$ on a finite alphabet,
let $f_w(k)$ be the number of ways to read the word from left to right $k$ times (each time always picking some unchosen letter) so 
that we arrive back at the original word $w$. 
Let $F_w(x) = \sum_{k} f_w(k) x^k$.
\end{problem}

\begin{example}
Let $w=cbabac=w_1w_2w_3w_4w_5w_6$ be a word. 
The number of ways to read $w$ from left to right in one passing is 1 since we must sequentially read all letters $w_1w_2w_3w_4w_5w_6$, so $f_w(1)=1$.
There are several ways to read $w$ from $w$ in two passings: read positions 1 through to 5 on the first passing, and read position 6 on the second passing.
This is summarized by the reading vector $(1,1,1,1,1,2)$ where $j$ at position $i$ means $w_i$ is read on the $j$th passing.
If we abbreviate this to 111112, then the other reading vectors for $k=2$ are
111122,
111222,
112222,
122222, and
122112
so $f_w(2)=6$.
For this case, $F_w(x)= x+ 6x^2 + 17x^3 +26 x^4 + 22x^5 + 8 x^6$.
\end{example}

\begin{example}
If all letters of a length $n$ word $w$ are distinct, then $F_w(x)=x(1+x)^{n-1}$. 
Similarly if $w$ is a word consisting of $n$ copies of the same letter, then $F_w(x) = \sum_{k=1}^{n} k! \stirlingsecond{n}{k} x^k=\Fubini{n}{(x)}$.
\end{example}

\begin{theorem}
Let $D_{\pi} \in W_n$ with $\pi=w_1\oplus \cdots \oplus w_n$ where every $w_i \in \Inde$.
Let $w$ be the word $(w_1,\ldots,w_{\indparts(\pi)})$ on the alphabet $\{w_1,\ldots,w_{\indparts(\pi)}\}$.
Then $\MMmatrix{(W)}{(x)}{D_{\pi},D_{\pi}} =  F_w(x)$.
\end{theorem}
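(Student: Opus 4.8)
The plan is to set up an explicit bijection between the $k$-colourings $\alpha$ of $D_\pi$ with $\Recon{D_\pi}{\alpha}=D_\pi$ and the ``reading vectors'' of the word $w=(w_1,\ldots,w_r)$, where $r=\indparts(\pi)$. The first observation is the same one used in Section~\ref{secfive}: because each $w_i$ is indecomposable, any colouring $\alpha$ fixing $D_\pi$ must assign a single colour to all edges within a given block $D_{w_i}$; so $\alpha$ is equivalent to a surjective function $\alpha:\{1,\ldots,r\}\to[1,k]$ recording which ``passing'' the $i$th block is read on. Thus $\MMmatrix{(W)}{(x)}{D_\pi,D_\pi}=\sum_{k\ge 1}x^k\cdot(\text{number of such }\alpha\text{ that reconstruct }D_\pi)$, and the whole content of the theorem is to show that this count equals $f_w(k)$ — i.e. that the reconstruction condition on block-colourings is \emph{exactly} the condition defining a valid reading vector of the word $w$.

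First I would make precise what ``reading $w$ from left to right $k$ times'' means as a combinatorial object: it is a function $g:\{1,\ldots,r\}\to[1,k]$ that is surjective and has the property that, for each pair of equal letters $w_i=w_j$ with $i<j$, we have $g(i)\le g(j)$ (equal letters must be read in their original relative order across passings, since on each passing one reads left to right and so never reverses two copies of the same letter); there is no constraint relating the positions of \emph{distinct} letters. I would verify this reading against the worked example $w=cbabac$: the listed vectors are exactly the surjections onto an initial segment that are weakly increasing on the two $b$'s (positions $2,4$) and on the two $a$'s (positions $3,5$), and the two $c$'s (positions $1,6$), which matches the six vectors given for $k=2$, plus $111111$ for $k=1$, and one checks the count $F_w(x)=x+6x^2+17x^3+26x^4+22x^5+8x^6$ is consistent with the claimed polynomial.

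The core step is then: a block-colouring $\alpha:\{1,\ldots,r\}\to[1,k]$ (surjective) satisfies $\Recon{D_\pi}{\alpha}=D_\pi$ if and only if $w_i=w_j,\ i<j$ implies $\alpha(i)\le\alpha(j)$. For this I would unwind the reconstruction operation on $W_n$ concretely. Recall $D_\pi=D_{w_1}\oplus\cdots\oplus D_{w_r}$, and $\Recon{D_\pi}{\alpha}=\rel(D_\alpha(1))\oplus\cdots\oplus\rel(D_\alpha(k))$, where $D_\alpha(c)$ is the union of those blocks $D_{w_i}$ with $\alpha(i)=c$. Since within a colour class the blocks keep their original top-to-bottom order and $\rel$ simply restacks them, $D_\alpha(c)$ relabels to $D_{u_c}$ where $u_c$ is the $\oplus$-sum of the indecomposables $w_i$ with $\alpha(i)=c$, taken in increasing order of $i$. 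Hence $\Recon{D_\pi}{\alpha}=D_{u_1\oplus\cdots\oplus u_k}$, and by uniqueness of the indecomposable decomposition of a permutation this equals $D_\pi$ iff $u_1\oplus\cdots\oplus u_k=w_1\oplus\cdots\oplus w_r$, i.e. iff reading the indecomposables $w_i$ in the order dictated by $\alpha$ (colour class by colour class, and within a class in order of $i$) recovers the original left-to-right sequence $w_1,\ldots,w_r$. Spelling this out: $\alpha$ works iff whenever $i<j$ but $\alpha(i)>\alpha(j)$, reordering would swap $w_i$ past $w_j$ — this is harmless precisely when $w_i\ne w_j$ as \emph{sequences} of indecomposables, but if $w_i=w_j$ it produces a different ordered tuple; so the condition is exactly weak monotonicity of $\alpha$ on positions carrying equal ``letters''. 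I would be slightly careful here that the relevant notion of ``equal letter'' is equality of the indecomposable web diagrams $D_{w_i}=D_{w_j}$, equivalently $w_i=w_j$ as permutations, which is exactly how the word $w=(w_1,\ldots,w_r)$ over the alphabet $\{w_1,\ldots,w_r\}$ is defined in the statement.

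Finally, assembling: the number of valid $\alpha$ with $|\alpha|=k$ equals the number of valid reading vectors of $w$ with $k$ passings, namely $f_w(k)$; summing $x^k f_w(k)$ over $k$ gives $F_w(x)=\MMmatrix{(W)}{(x)}{D_\pi,D_\pi}$. The main obstacle is the ``core step'' — pinning down that $\Recon{\cdot}{\cdot}$ restricted to this two-peg world really does amount to a stable-sort of the list of indecomposable blocks by colour, and that the fixed points of that operation are characterised by the equal-letters monotonicity condition; the rest is bookkeeping. As a sanity check I would note the two special cases in the preceding example match: if all $w_i$ are distinct there are no monotonicity constraints, giving $\sum_k (\text{surjections }[r]\to[k]) x^k$... wait, that would be $\Fubini{r}{(x)}$, not $x(1+x)^{r-1}$, so in fact the ``distinct letters'' example corresponds to a word with no repeats but the count $x(1+x)^{n-1}$ signals that a reading vector need only be \emph{order-preserving onto its image}, not surjective onto an initial segment in the naive sense — I would double-check the exact surjectivity convention in Problem~\ref{problemone} and align it with the $k$-colouring surjectivity convention so the two generating functions agree; reconciling these conventions is the one genuinely delicate point in the write-up.
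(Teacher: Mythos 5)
Your reduction to block colourings, and the computation of $\Recon{D_\pi}{\alpha}$ as $D_{u_1\oplus\cdots\oplus u_k}$ with $u_c$ the $\oplus$-sum of the blocks coloured $c$ taken in their original order, is exactly the paper's argument; and at the point where you observe that $\Recon{D_\pi}{\alpha}=D_\pi$ iff $u_1\oplus\cdots\oplus u_k=w_1\oplus\cdots\oplus w_r$ you are in fact done, because the subword of $(w_1,\ldots,w_r)$ carved out by colour class $c$ is precisely what is read on the $c$-th passing, so that displayed condition \emph{is} the defining condition of Problem~\ref{problemone}. The paper stops there. The genuine error is your ``core step'', where you recharacterise both sides of the bijection as ``$\alpha$ weakly increasing on positions carrying equal letters''. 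That characterisation is false, and the paper's own example refutes it: the reading vector $122112$ for $w=cbabac$ is listed as valid (passing $1$ reads $w_1w_4w_5=cba$, passing $2$ reads $w_2w_3w_6=bac$, and the concatenation is $cbabac$), yet it assigns colour $2$ to the $b$ in position $2$ and colour $1$ to the $b$ in position $4$, so it is not weakly increasing on the $b$-class. Your justification (``swapping $w_i$ past $w_j$ is harmless precisely when $w_i\ne w_j$'') is also inverted --- transposing two \emph{equal} letters is harmless and transposing distinct ones is not --- but neither version yields a per-letter-class monotonicity criterion, because the reordering induced by a colouring is a shuffle, not a single transposition. The correct statement is that the stable-sort permutation $\tau$ induced by $\alpha$ must satisfy $w_{\tau(i)}=w_i$ for all $i$, which is exactly ``the concatenation of the subwords equals $w$'' and admits no simpler reformulation.

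Your closing sanity check already detects the failure: with all letters distinct your criterion imposes no constraint beyond surjectivity and would give $\Fubini{r}{(x)}$, whereas the paper states $F_w(x)=x(1+x)^{r-1}$ --- the concatenation condition forces each colour class to be a contiguous block, so there are $\binom{r-1}{k-1}$ readings with $k$ passings, not $k!\stirlingsecond{r}{k}$. This is not a surjectivity convention to be ``reconciled''; it is the monotonicity characterisation that is wrong. Deleting that step and identifying the condition $u_1\oplus\cdots\oplus u_k=w_1\oplus\cdots\oplus w_r$ directly with Problem~\ref{problemone} turns your writeup into the paper's proof.
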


\begin{proof}
Let $D_{\pi} \in W_n$ with $\pi=\sigma_1\oplus \cdots \oplus\sigma_m$. 
In order to colour the edges of $D_{\pi}$ so that the same diagram is reconstructed, 
the colours of the edges that are in the same indecomposable part must remain the same. 
(Otherwise we would be constructing a web diagram which has more indecomposable parts than the original one, which would be a contradiction.)
\newcommand{\Col}[2]{\mathsf{Col}_{#1}(#2)}
Given a set $A=\{a_1,\ldots,a_i\}$, define $\sigma_A = \sigma_{a_1}\oplus \cdots \oplus\sigma_{a_i}$ where $a_1< \cdots <a_i$.
Let $$\RestColourings{m}{k} = \{ (c_1,\ldots,c_m)\in [1,k]^m~:~ \{c_1,\ldots,c_m\}=[1,k]\}.$$
Given $c \in \RestColourings{m}{k}$, let $\Col{c}{i}= \{j \in [1,m] ~:~ c_j=i\}$.

The set of colourings $c'$ of edges of $D_{\pi}$ which results in a reconstruction of the diagram is in 1-1 correspondence with 
the set of colourings $c\in \RestColourings{m}{\cdot}$ such that
$$\sigma_1 \oplus \cdots \oplus \sigma_m = \sigma_{\Col{c}{1}} \oplus \cdots \oplus \sigma_{\Col{c}{k}}.$$
This is the word reconstruction problem (Problem~\ref{problemone}) for the word $w=(\sigma_1,\ldots,\sigma_m)$ which is on the alphabet $\{\sigma_1,\ldots,\sigma_m\}$.
\end{proof}

There is no simple formula for the diagonal entries of the web-colouring matrix for this web world due to there being no 
`closed form' answer to Problem~\ref{problemone}.
However, we have noticed some potential structure from looking at the trace of the web-colouring matrices for small values of $n$, outlined in Figure~\ref{potential}.
The numbers from Figure~\ref{potential} support the following conjecture for all $n\in [1,8]$.

\begin{figure}
$$
\begin{array}{|c|c|} \hline
n & \trace \left(\MMmatrix{(W(n))}{(x)}{}\right) \\ \hline \hline
1 & x \\
2 & 2x+2x^2 \\
3 & 6x+8x^2+6x^3 \\
4 & 24x+30x^2+42x^3+24x^4 \\
5 & 120x+116x^2+216x^3+264x^4+120x^5 \\
6 & 720x+532x^2+1002x^3+1920x^4+1920x^5+720x^6 \\
7 & 5040x+2848x^2+ 4626x^3+ 11688x^4+ 19200x^5+15840x^6+5040x^7 \\  \hline
\end{array}
$$
\caption{The sequence of numbers that correspond to \cite[A121635]{oeis} (as mentioned in Conjecture~\ref{conjme}) are the coefficients of the second highest exponent of $x$ in each row: 2, 8, 42, 264, 1920 and 15840.
\label{potential}}
\end{figure}

\begin{conjecture}\label{conjme}
Let $a(n)=[x^{n-1}] \trace \left(\MMmatrix{(W(n))}{(x)}{}\right)$. Then $a(n)=(n-2)!(n^2-3n+4)/2$ for all $n>1$.
This sequence of numbers enumerates a class of directed column convex polyominoes in the plane. (See ~\cite[A121635]{oeis} and ~\cite{barcucci}.)
\end{conjecture}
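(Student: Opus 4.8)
The plan is to read off $a(n)$ directly from the trace, using the theorem that identifies the diagonal web-colouring entries of $W_n$ with the word-reconstruction generating functions of Problem~\ref{problemone}. That theorem gives
\[
\trace \MMmatrix{(W(n))}{(x)}{} \;=\; \sum_{\pi \in \Sym_n} \MMmatrix{(W(n))}{(x)}{D_\pi,D_\pi} \;=\; \sum_{\pi \in \Sym_n} F_{w(\pi)}(x),
\]
where $w(\pi)$ is the length-$\indparts(\pi)$ word whose $i$th letter records the $i$th indecomposable summand of $\pi$ (two letters being equal exactly when the summands are equal permutations). The first observation is that $\deg F_w(x) = |w|$: the identity reading (one pass per letter, left to right) always recovers $w$, so $f_w(|w|)\ge 1$, while no reading uses more than $|w|$ passes. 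Hence $\deg F_{w(\pi)}(x) = \indparts(\pi) \le n$, and only the $\pi$ with $\indparts(\pi)\in\{n-1,n\}$ contribute to $a(n) = [x^{n-1}]\trace \MMmatrix{(W(n))}{(x)}{}$.

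I would then treat the two surviving families separately. The only $\pi$ with $\indparts(\pi)=n$ is $\pi = 1+1+\cdots+1$, whose word is $n$ copies of one letter; the stated evaluation of $F_w$ on a one-letter word gives $F_{w(\pi)}(x)=\Fubini{n}{(x)}$, contributing $[x^{n-1}]\Fubini{n}{(x)} = (n-1)!\stirlingsecond{n}{n-1} = (n-1)!\binom{n}{2}$. For $\indparts(\pi)=n-1$ the lengths of the indecomposable parts form the composition $2+1+\cdots+1$ of $n$, so $\pi$ has exactly one length-$2$ part, which must be $21$, together with $n-2$ parts equal to $1$; there are $n-1$ such $\pi$, one for each position of the $21$-block. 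For each of these $w(\pi)$ has length $n-1$, so $[x^{n-1}]F_{w(\pi)}(x)$ is its \emph{top} coefficient $f_{w(\pi)}(n-1)$, which counts bijective readings --- orderings of the $n-1$ positions that spell $w(\pi)$ back, i.e. the letter-preserving permutations of $w(\pi)$ --- of which there are $1!\,(n-2)! = (n-2)!$. Summing,
\[
a(n) \;=\; (n-1)!\binom{n}{2} + (n-1)(n-2)! \;=\; (n-1)!\left(\binom{n}{2}+1\right) \;=\; \frac{(n-1)!\,(n^2-n+2)}{2},
\]
which matches the entries in Figure~\ref{potential}.

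A cleaner and more reusable way to perform the coefficient extraction --- one that also yields all of $F_w$, not just its two leading coefficients --- is to first establish the run-decomposition identity $F_w(x) = \sum_q x^{\des(q)+1}(1+x)^{|w|-1-\des(q)}$, the sum running over the letter-preserving permutations $q$ of $w$: a $k$-pass reading is equivalent to a choice of such a $q$ (the order in which positions are visited) together with a refinement of its descent set to a $(k-1)$-element set of cut points, contributing $\binom{|w|-1-\des(q)}{k-1-\des(q)}$. Extracting $[x^{n-1}]$ then turns the $\indparts(\pi)=n$ term into $\sum_{q\in\Sym_n}\bigl(n-1-\des(q)\bigr) = n!(n-1) - \tfrac{1}{2} n!(n-1)$, using that $\des$ and $(n-1)-\des$ are equidistributed on $\Sym_n$, and each $\indparts(\pi)=n-1$ term into the number of letter-preserving permutations of $w(\pi)$, recovering the same total.

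So the web-matrix half of the statement is a short consequence of the preceding theorem together with the already-established Fubini evaluation; the part I expect to be the real obstacle is the closing assertion that $(a(n))_{n\ge 2}$ counts a family of directed column-convex polyominoes \cite[A121635]{oeis}. To prove that I would either construct an explicit bijection between the polyominoes enumerated by Barcucci et al.~\cite{barcucci} and a combinatorial model for $(n-1)!\bigl(\binom{n}{2}+1\bigr)$ --- for instance a linear order on $n-1$ objects decorated with either a distinguished unordered pair or a distinguished ``defect'' slot --- or, more modestly, verify that the generating function known for that polyomino class reproduces $\tfrac{(n-1)!(n^2-n+2)}{2}$ and invoke it; the bijective version is where the work lies.
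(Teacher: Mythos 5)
The paper offers no proof of this statement: it is left as a conjecture, supported only by the data in Figure~\ref{potential}, so there is nothing of the authors' to compare your argument against. Your argument itself is sound and essentially settles the enumerative half of the conjecture. The key steps all check out: $\deg F_w(x)=|w|$ (the one-letter-per-pass, left-to-right reading gives $f_w(|w|)\geq 1$, and no reading uses more than $|w|$ passes), so only $\pi$ with $\indparts(\pi)\in\{n-1,n\}$ survive in $[x^{n-1}]$; the identity permutation contributes $(n-1)!\,\stirlingsecond{n}{n-1}=(n-1)!\binom{n}{2}$ via $\Fubini{n}{(x)}$; and each of the $n-1$ permutations with a single $21$-block contributes its top coefficient $f_w(n-1)=(n-2)!$, the product of the factorials of the letter multiplicities, since a reading with exactly one pass per letter is nothing but a letter-preserving rearrangement of the positions. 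Your run-decomposition identity $F_w(x)=\sum_q x^{\des(q)+1}(1+x)^{|w|-1-\des(q)}$ is also correct (it reproduces $x+6x^2+17x^3+26x^4+22x^5+8x^6$ for $cbabac$) and is a worthwhile observation in its own right, since it answers Problem~\ref{problemone} in general and specialises to both evaluations of $F_w$ stated in the paper.

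Two remarks. First, you should state explicitly that what you have proved is $a(n)=(n-1)!\,(n^2-n+2)/2$, which agrees with every entry of Figure~\ref{potential} but is \emph{not} the formula $(n-2)!\,(n^2-3n+4)/2$ printed in the conjecture: at $n=4$, the figure and your formula give $42$ while the printed expression gives $8$. The two expressions differ exactly by the substitution $n\mapsto n+1$, so the conjecture as stated contains an indexing slip (presumably inherited from the offset of the OEIS entry), and your computation simultaneously proves and corrects it; do not let the phrase ``which matches the entries in Figure~\ref{potential}'' paper over this. Second, as you yourself acknowledge, the closing sentence of the conjecture --- that these numbers count a class of directed column-convex polyominoes --- is not addressed by your argument; it follows only by citing (or reproving) the corresponding closed form for that polyomino class from \cite{barcucci} or \cite[A121635]{oeis}, and a bijection would be a separate piece of work. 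With the index corrected and that citation supplied, your proposal upgrades the enumerative part of the conjecture to a theorem.
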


\end{document}